\newtheorem{lem}{Lemma}
\newtheorem{prop}{Proposition}
\newtheorem{thm}{Theorem}
\newtheorem{cor}{Corollary}
\newcommand{\nc}{\newcommand}
\nc{\renc}{\renewcommand}
\nc{\ssec}{\subsection}
\nc{\sssec}{\subsubsection}
\nc{\on}{\operatorname}
\nc {\ra} {\rightarrow}
\nc {\inj} {\hookrightarrow}
\nc {\lra} {\longleftrightarrow} 
\nc         {\rar}[1]       {\stackrel{#1}{\longrightarrow}}
\nc {\Sl}{\mathfrak{sl}}
\nc{\fg}{\mathfrak{g}}
\nc{\fI}{\mathfrak{i}}
\nc{\fh}{\mathfrak{h}}
\nc{\fb}{\mathfrak{b}}
\nc{\fu}{\mathfrak{u}}
\nc{\fn}{\mathfrak{n}}
\nc{\hfg}{\widehat{\fg}}
\nc{\hfh}{\widehat{\fh}}
\nc {\hH}{{\check{H}}}
\nc {\hB}{{\check{B}}}
\nc {\hN}{{\check{N}}}
\nc {\hG}{{{\check{G}}}}
\nc {\cfg}{\check{\fg}}
\nc {\cfb}{\check{\fb}}
\nc {\cfn}{\check{\fn}}
\nc {\cfh}{\check{\fh}}
\nc {\clambda}{\check{\lambda}}
\nc {\bone}{\mathbf{1}}
\nc {\bu}{\mathbf{u}}
\nc {\bv}{\mathbf{v}}
\nc {\bw}{\mathbf{w}}
\nc{\bC}{\mathbb{C}}
\nc{\bM} {\mathbb{M}}
\nc{\bU} {\mathbb{U}}
\nc{\bW}{\mathbb{W}}
\nc{\bL}{\mathbb{L}}
\nc {\bZ} {\mathbb{Z}}
\nc {\bGm} {\mathbb{G}_m}
\nc {\cD}{\mathcal{D}}
\nc{\cZ}{\mathcal{Z}}
\nc {\cDt}{\cD^\times} 
\nc {\cA} {\mathcal{A}}
\nc {\cI}{\mathcal{I}}
\nc {\cR}{\mathcal{R}}
\nc {\cJ}{\mathcal{J}}
\nc {\cS}{\mathcal{S}}
\nc {\cO}{\mathcal{O}}
\nc {\tU} {\widetilde{U}}
\nc {\Ind}{\mathrm{Ind}}
\nc {\Hom}{\mathrm{Hom}}
\nc {\crit}{\mathrm{crit}}
\nc {\Res}{\mathrm{Res}} 
\nc {\ev}{\mathrm{ev}}
\nc{\Spec}{\mathrm{Spec}\,}
\nc {\ord}{\mathrm{ord}}
\nc {\Op}{\mathrm{Op}}
\nc {\MOp}{\mathrm{MOp}}
\nc {\Conn}{\mathrm{Conn}}
\nc {\MT}{\mathrm{MT}}
\nc{\Sym}{\mathrm{Sym}}
\nc {\modd}{\mathrm{mod}}
\nc {\ad}{\mathrm{ad}}
\nc {\Tr}{\mathrm{Tr}}
\nc{\Kil}{\mathrm{Kil}}
\nc {\End}{\mathrm{End}}
\nc {\can}{\mathrm{can}}
\nc {\Aut}{\mathrm{Aut}}
\nc {\Der}{\mathrm{Der}}
\nc {\gr}{\mathrm{gr}}
\nc {\res}{\mathrm{res}}
\nc {\bP} {\bar{P}}
\nc {\bJ}{\bar{J}}
\nc {\llp} {\mathopen{ (\!(}}
\nc {\rrp} {\mathopen{ )\!)}}
\nc {\llb} {\mathopen{ [\![}}
\nc {\rrb} {\mathopen{ ]\!]}}
\nc {\lc} {\mathopen{:\!}}
\nc {\rc}{\mathopen{\!:}}
\nc{\fgbb} {\fg\llb t \rrb}
\nc{\fgpp} {\fg\llp t \rrp}
\nc{\fhbb} {\fh\llb t \rrb}
\nc{\fhpp} {\fh\llp t \rrp}
\nc{\bCpp} {\bC \llp t \rrp}
\nc{\bCbb} {\bC \llb t \rrb}
\nc{\hNpp} {\hN \llp t \rrp}
\nc{\hNbb} {\hN \llb t \rrb}
\nc{\cfbpp} {\cfb \llp t \rrp}
\nc{\cfbbb} {\cfb \llb t \rrb}
\begin{document}
\title{Compatibility of the Feigin-Frenkel Isomorphism and the Harish-Chandra Isomorphism for jet algebras}  
\author{Masoud Kamgarpour} 
\email{masoud@uq.edu.au}
\address{School of Mathematics and Physics, The University of Queensland}
\date{\today}

\thanks{The problems addressed in this paper arose in joint work with Travis Schedler \cite{MasoudTravis1} \cite{MasoudTravis2}. I would like to thank him for many helpful conversations about this paper. I wish to thank M. Duflo, E. Frenkel, D. Gaitsgory, A. Molev, S. Raskin,  R. Zhang, and X. Zhu for their encouragement and helpful conversations. I  would also like to thank the Max Planck Institute for Mathematics in Bonn for its hospitality.}

\subjclass[2010]{22E50, 20G25}

\keywords{Irregular opers, residue, jet algebras, Verma modules, Wakimoto modules}

\begin{abstract} Let $\fg$ be a simple finite-dimensional complex Lie algebra with a Cartan subalgebra $\fh$ and Weyl group $W$. Let
 $\fg_n$ denote the Lie algebra of $n$-jets on $\fg$. A theorem of Rais and Tauvel and Geoffriau identifies the centre of the category of $\fg_n$-modules with the algebra of functions on the variety of $n$-jets on the affine space $\fh^*/W$. On the other hand, a theorem of Feigin and Frenkel identifies the centre of the category of critical level smooth modules of the corresponding affine Kac-Moody algebra with the algebra of functions on the ind-scheme of opers for the Langlands dual group. We prove that these two isomorphisms are  compatible by defining the higher residue of opers with irregular singularities. We also define generalized Verma and Wakimoto modules and relate them by a nontrivial morphism. 
\end{abstract}

\maketitle

\tableofcontents


\section{Introduction} 
 
Representation theory of affine Kac-Moody algebra at the critical level plays a crucial role in Beilinson and Drinfeld's approach to the geometric Langlands program \cite{BD}. An essential ingredient of this approach is a correspondence, due to Feigin and Frenkel \cite{FF}, between the center of the (completed) universal enveloping algebra of an affine Kac-Moody algebra at the critical level and the geometry of certain operators, known as \emph{opers}, associated with the Langalnds dual datum. Building on \cite{BD}, Frenkel and Gaitsgory \cite{BIBLE} formulated the local geometric Langlands correspondence. In a series of subsequent papers, the authors established several cases of this program, mostly concerning the unramified and tamely ramified situations. Opers are meromorphic connections equipped with extra structure, and the unramified (resp. tamely ramified) local geometric Langlands program concerns those opers whose underlying connections are holomorphic (resp. meromorphic with singularities of order at most one).

The full, and still very conjectural, local geometric Langlands program \cite{BIBLE} requires studying also the \emph{irregular opers}, i.e.,  opers whose underlying connections have singularities bigger than one. The latter opers are related to the wildly ramified part of the local geometric Langlands program, since it is known that there is a deep and mysterious analogy between wildly ramified representations of the Galois group and irregular singularities; see, for instance, \cite{Katz}. There are, currently, few publications dealing with the wildly ramified part of the local geometric Langlands program; see, for instance, \cite{Laredo} and \cite{Fedorov}. The present paper is a contribution in this area. 

In more details, we determine the central support of certain critical level representations by defining the notion of higher residue of irregular opers. Furthermore, we define two classes of these modules: \emph{generalized Verma} and \emph{generalized Wakimoto modules}. In contrast to the usual Verma and Wakimoto modules,  the representations we consider are not in category $\cO$, nor are they Iwahori integrable; in particular, we do not know how to define the character of these representations.  Nevertheless, we expect that they have some of the properties of the usual Verma and Wakimoto modules. For example, we prove that there exists a nontrivial morphism from the generalized Verma to the generalized Wakimoto module. We expect that, under appropriate conditions, this morphism is an isomorphism. 
Our constructions and results should be related to the categorification of ramified principal series representations in the setting of local geometric Langlands program, since the generalized Verma modules are analogues of families of principal series representations studied in \cite{MasoudTravis1} and \cite{MasoudTravis2}.

\ssec{Recollections on the centre of enveloping algebras}\label{ss:recollections}

\sssec{Harish-Chandra's Isomorphism}  
Let $\fg$ be a simple finite dimensional Lie algebra of rank $\ell$ over the complex numbers and let $\cZ(\fg)$ denote the centre of the universal enveloping algebra of $\fg$. Note that $\cZ(\fg)$ is isomorphic to the (Bernstein) centre of the category of representations of $\fg$. Let $\fh$ be a Cartan subalgebra and let $W$ denote the  Weyl group. Let $\fh^*=\Hom(\fh,\bC)$ denote the dual vector space to $\fh$. Harish-Chandra's Isomorphism states that the commutative algebra $\cZ(\fg)$ is canonically isomorphic to $W$-invariant functions on $\fh^*$. Since $W$ is a finite group acting on an affine variety $\fh^*$, one can define the structure of an affine variety  on $\fh^*/W$; thus, we have $\cZ(\fg) \simeq \bC[\fh^*/W]$. 

\sssec{Jets and loops} Given a ``space" $X$, one can consider the space of jets and loops on $X$. In algebraic geometry, the jet space $X\llb t \rrb$ (resp. loop space $X\llp t \rrp$) is defined by its functor of points: 
\[
X\llb t \rrb: = X(R\otimes \bC\llb t \rrb),\quad \quad (\textrm{resp.}\, X\llp t \rrp(R) := X(R\otimes \bC\llp t \rrp)),
\]
 where $R$ is a $\bC$ algebra. Similarly, we can define the $(n-1)$-jets on $X$ by replacing $R\otimes \bC\llb t \rrb$  with $R\otimes \bC\llb t \rrb/t^{n}$. 
 In general, if $X$ is a scheme, then $X\llb t \rrb$ and $X_n$ have the structure of a scheme. However, $X\llp t \rrp$ may not have a reasonable algebra-geometric realization; this happens, for example, when $X$ is the flag variety.\footnote{This issue is the source of a lot of  problems in  geometric representation theory and the geometric Langlands program, including the fact that dealing with Wakimoto modules is rather difficult.} If $X$ is affine, however, one knows that $X\llp t \rrp$ has a canonical structure of a (strict) ind-scheme. 
 
 \sssec{Harish-Chandra's Isomorphism for Jet algebras} Thus, we have the schemes $\fg_n$ and $(\fh^*/W)_n$. It is easy to see that $\fg_n$ is endowed with a canonical Lie bracket coming from $\fg$; in fact, $\fg_n\simeq \fg\otimes \bC[t]/t^n$.   The algebras $\fg_n$ are known as \emph{generalized Takiff algebras} or \emph{truncated current algebras}. Takiff \cite{Takiff} proved that the algebra of invariant polynomials $\bC[\fg_2]^{\fg_2}$ is isomorphic to $\bC[(\fh^*/W)_2]$. Rais and Tauvel \cite{RaisTauvel} generalized this result to arbitrary $n$. Combining their theorem with Duflo's Isomorphism \cite{Duflo}, one obtains the following result. 
 
  \begin{thm} \label{t:jets} 
 There exists a canonical isomorphism of commutative algebras $\cZ(\fg_n) \simeq \bC[(\fh^*/W)_n]$. 
 \end{thm}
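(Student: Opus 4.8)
The plan is to combine the two ingredients indicated in the statement: the Rais--Tauvel description of the invariant polynomials on $\fg_n$, and Duflo's isomorphism, which transports the symmetrization map to a filtered algebra isomorphism on the level of centres. First I would recall that the universal enveloping algebra $U(\fg_n)$ carries the Poincar\'e--Birkhoff--Witt filtration, with associated graded $\gr U(\fg_n)\simeq \Sym(\fg_n)\simeq \bC[\fg_n^*]$. Since $\fg_n\simeq \fg\otimes\bC[t]/t^n$ is finite-dimensional, Duflo's theorem \cite{Duflo} applies verbatim: the restriction of the symmetrization (PBW) map $\Sym(\fg_n)\to U(\fg_n)$ to the $\fg_n$-invariants $\Sym(\fg_n)^{\fg_n}$ is an algebra isomorphism onto $\cZ(\fg_n)$. (Here the adjoint action of $\fg_n$ on itself and hence on $\Sym(\fg_n)$ is used; note $\fg_n$ acts on $U(\fg_n)$ by the adjoint action and the invariants of that action form exactly the centre.) This gives a canonical isomorphism $\cZ(\fg_n)\simeq \Sym(\fg_n)^{\fg_n}=\bC[\fg_n]^{\fg_n}$, where I identify $\fg_n$ with $\fg_n^*$ via the (non-degenerate, $\fg_n$-invariant) bilinear form on $\fg\otimes\bC[t]/t^n$ obtained from the Killing form on $\fg$ paired with the residue pairing $(f,g)\mapsto \Res_{t=0}\, t^{-1} f g\, dt$ on $\bC[t]/t^n$; equivalently one uses the form $\langle x\otimes t^i, y\otimes t^j\rangle=\Kil(x,y)\delta_{i+j,n-1}$.

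Next I would invoke the Rais--Tauvel theorem \cite{RaisTauvel} (generalizing Takiff \cite{Takiff}), which asserts precisely that $\bC[\fg_n]^{\fg_n}\simeq \bC[(\fh^*/W)_n]$, this isomorphism being the canonical one induced by the Chevalley restriction map: the inclusion $\fh\hookrightarrow\fg$ induces $\fh_n\hookrightarrow\fg_n$, hence a restriction homomorphism $\bC[\fg_n]^{\fg_n}\to\bC[\fh_n]$, and one checks its image lands in $\bC[\fh_n]^{W}=\bC[(\fh^*/W)_n]$ (using the identification $\fh\simeq\fh^*$ from the form, and the fact that $W$ acts on $\fh_n$ diagonally on the jet coordinates) and that this map is an isomorphism. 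Composing with the Duflo isomorphism of the previous paragraph yields the desired canonical isomorphism
\[
\cZ(\fg_n)\;\simeq\;\bC[\fg_n]^{\fg_n}\;\simeq\;\bC[(\fh^*/W)_n].
\]
The fact that $\cZ(\fg_n)$ agrees with the Bernstein centre of the category of $\fg_n$-modules is formal, since $\fg_n$ is a Lie algebra over a field and the centre of the category of modules over any associative algebra is the centre of that algebra (here $U(\fg_n)$).

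The only genuine subtlety — and the step I expect to be the main obstacle, were one to write out full details rather than cite — is the Rais--Tauvel isomorphism $\bC[\fg_n]^{\fg_n}\simeq\bC[(\fh^*/W)_n]$ itself: surjectivity of Chevalley restriction in the jet setting requires producing enough invariants, which is done by applying the jet functor to the fundamental invariants $p_1,\dots,p_\ell\in\bC[\fg]^{\fg}$ (i.e. taking their Taylor coefficients in $t$, giving $n\ell$ invariants of $\fg_n$) and checking these freely generate, while injectivity uses that the Chevalley morphism $\fg\to\fh^*/W$ is flat with the right fibre dimensions so that taking jets behaves well. One also must verify that all the identifications ($\fg\simeq\fg^*$, $\fh\simeq\fh^*$, the jet form) are compatible so that the composite is independent of choices up to the canonical isomorphisms; this is where ``canonical'' in the statement is earned. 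I would simply cite \cite{RaisTauvel} and \cite{Duflo} for these points and move on.
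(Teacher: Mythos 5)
Your proposal follows essentially the same route as the paper, whose proof of Theorem \ref{t:jets} consists precisely of combining the Rais--Tauvel isomorphism $\bC[\fg_n]^{\fg_n}\simeq \bC[(\fh/W)_n]$ with Duflo's isomorphism for the finite-dimensional Lie algebra $\fg_n$, exactly as you do. Two small slips worth correcting: Duflo's theorem concerns the symmetrization map corrected by the Duflo element $j^{1/2}$ (plain symmetrization restricted to invariants is only a linear, not an algebra, isomorphism onto the centre in general), and the residue pairing on $\bC[t]/t^n$ should be $(f,g)\mapsto \Res_{t=0}\, t^{-n}fg\,dt$ rather than $t^{-1}fg\,dt$, so that it agrees with your (correct) explicit formula $\langle x\otimes t^i, y\otimes t^j\rangle=\Kil(x,y)\,\delta_{i+j,n-1}$.
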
 
 
Geoffriau \cite{GeoffriauCenter}  proved the above result by generalizing Harish-Chandra's approach to the jet setting; in particular, he was able to determine the action of $\cZ(\fg_n)$ on the Verma modules for $\fg_n$, see \S \ref{sss:Verma}.

\sssec{Loop algebras} We now consider the ind-schemes $\fgpp$ and $(\fh^*/W)\llp t \rrp$. It is easy to see that $\fgpp$ is a Lie algebra; in fact, $\fgpp\simeq \fg\otimes \bCpp$.  It is natural to wonder if the centre of the universal enveloping algebra of $\fgpp$ is related to the algebra of functions on the ind-scheme $(\fh^*/W)\llp t \rrp$. It turns out to be fruitful to consider a more general problem. 
Let $\kappa$ be an invariant bilinear form on $\fg$. The affine Kac-Moody algebra $\hfg_\kappa$ at level $\kappa$ is defined to be the central extension
\[
0\ra \bC.\bone \ra \hfg_\kappa\ra \fgpp \ra 0,
\]
with the two-cocycle defined by the formula 
\[
(x\otimes f(t), y\otimes g(t)) \mapsto -\kappa(x,y).\,\Res_{t=0} fdg.
\]
A module $V$ over $\hfg_\kappa$ is \emph{smooth} if for every $v\in V$ there exits $N_v\geq 0$ such that $t^{N_v} \fgbb$ annihilates $v$, and such that $\bone\in \bC\subset \hfg_\kappa$ acts on $V$ as the identity.  Let $\hfg_\kappa-\modd$ denote the category of smooth $\hfg_\kappa$-modules. Let $\cZ(\hfg_\kappa)$ denote the centre of this category. 
Note that $\hfg_\kappa-\modd$ is equivalent to a category of modules over a certain completion of the universal enveloping of $\hfg_\kappa$ and $\cZ(\hfg_\kappa)$ is isomorphic to the centre of this completed algebra.

\sssec{Feigin and Frenkel's Theorem: coordinate dependent version} The level corresponding to the invariant form $-\frac{1}{2}$ times the Killing form is known as the \emph{critical level} and is denoted by $\kappa_{\crit}$.  Feigin and Frenkel's Theorem states that $\cZ_\kappa(\fg)$ is trivial, except when $\kappa=\kappa_\crit$, in which case it is isomorphic to the (topological) algebra of functions on $(\fh^*/W)\llp t \rrp$. In other words, while the center of $\fgpp-\modd$ is trivial, the centre of $\hfg_\crit-\modd$ has a description similar to Harish-Chandra's description of the centre of $\fg-\modd$.

\sssec{Coordinate independence} 
 Let $X$ be a smooth projective curve over the complex numbers (equivalently, $X$ is a Riemann surface). For applications to the geometric Langlands program, it is important to note that given $\fg$, one can associate to every $x\in X$, an affine Kac-Moody algebra $\hfg_{\kappa,x}$. In particular, for every $x$, we have a topological commutative algebra $\cZ(\hfg_{\kappa_\crit,x})$. If we choose a local coordinate $t$ at $x$, then we obtain an isomorphism 
 \[
\cZ(\hfg_{\kappa_\crit,x}) \simeq \bC[(\fh^*/W)\llp t \rrp].
\]
This isomorphism is not, however, canonical, since the elements of $\cZ(\hfg_{\kappa_\crit,x})$ do not transform as functions on $(\fh^*/W)\llp t \rrp$ under the group of change of coordinates. Rather, they transform as opers. We refer the reader to \cite{FrenkelBook} for an extensive discussion regarding coordinate independence. To recall the definition of opers, we need some notation.

\sssec{Notation} \label{sss:notation} 
Choose a Borel subalgebra $\fb$ containing $\fh$, and let $\fn=[\fb,\fb]$. Let $\cfg$ denote the Langlands dual Lie algebra to $\fg$. Henceforth, we identify $\fh^*$ with a Cartan subalgebra $\cfh$ of $\cfg$. Let $\cfb$ and $\cfn$ denote the corresponding subalgebras of $\cfg$. 
Choose generators $f_i$ for the root subgroups corresponding to the negative simple roots of $\cfg$. Let 
\[
\displaystyle p_{-1}=\sum_{i=1}^{\ell} f_i.
\]
 Let $\hG$ denote the adjoint simple group whose Lie algebra is isomorphic to $\cfg$. Let $\hB$, $\hN$, and $\hH$ denote the subgroups of $\hG$ corresponding to $\cfb$, $\cfn$, and $\cfh$, respectively. 

\sssec{Opers} The notion of opers is due to Beilinson and Drinfeld \cite{opers}. For us, the following description, given in terms of a local coordinate, suffices: a $\hG$-oper (on the punctured disk $\cDt:=\Spec(\bCpp)$) is an $\hNpp$-conjugacy class (also known as a gauge equivalence class) of operators of the form 
\begin{equation} \label{eq:operDt}
\nabla =\partial_t + p_{-1} + x(t),\quad \quad x(t) \in \cfbpp
\end{equation} 
 One knows that $\hG$-opers form an ind-scheme; see, for instance, \cite[Corollary 1.3.2]{BIBLE}. In fact, we have an isomorphism $\Op_{\hG} \simeq (\fh^*/W)\llp t \rrp$, but this isomorphism depends on a choice of coordinate. We let $\Op_{\hG}$ denote the ind-scheme of opers over $\bC$. 
 
 \sssec{Feigin and Frenkel's Theorem: coordinate independent version} 
 
 \begin{thm}[\cite{FF}, \cite{FrenkelBook}] There exists a canonical isomorphism of  commutative topological algebras 
$\cZ(\hfg_\crit) \simeq \bC[\Op_{\hG}]$. \label{t:FF}
\end{thm}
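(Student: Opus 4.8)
The plan is to deduce Theorem~\ref{t:FF} from its coordinate-dependent form, recalled above, by keeping track of how everything behaves under a change of formal coordinate. First I would realize the centre concretely: writing $\mathbb{V}_\crit=\Ind_{\fgbb\oplus\bC\bone}^{\hfg_\crit}\bC$ for the vacuum module, the centre $\cZ(\hfg_\crit)$ is recovered as a topological completion from the commutative differential algebra $(\mathbb{V}_\crit)^{\fgbb}=\End_{\hfg_\crit}(\mathbb{V}_\crit)$, with differential $T=-\partial_t$. What must be proven is that, although a coordinate $t$ identifies $\cZ(\hfg_\crit)$ with $\bC[(\fh^*/W)\llp t\rrp]$, the transition functions obtained by varying the coordinate are those of $\bC[\Op_{\hG}]$ rather than those of functions on $(\fh^*/W)\llp t\rrp$; equivalently, both sides should be exhibited as associated bundles, over the $\Aut\cO$-torsor of formal coordinates on the curve, attached to isomorphic $\Aut\cO$-equivariant objects.

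Next I would identify $\cZ(\hfg_\crit)$ with $\bC[\Op_{\hG}]$ in a single coordinate, along a route that preserves naturality. Via the Wakimoto (free-field) realization, embed $\hfg_\crit$ into the tensor product of a $\beta\gamma$-system attached to $\hN$ with a Heisenberg vertex algebra $\pi_0$ attached to $\fh$; the centre then maps isomorphically onto the subalgebra of $\pi_0$ annihilated by the screening operators indexed by the simple roots. By the Feigin--Frenkel screening theorem this intersection of kernels is the classical $\mathcal{W}$-algebra attached to $\cfg$, and by Drinfeld--Sokolov reduction (following Beilinson--Drinfeld) the latter is canonically $\bC[\Op_{\hG}]$. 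The crucial point is that each ingredient---the Heisenberg algebra of $\fh$, the screenings indexed by the simple roots, the resulting $\mathcal{W}$-algebra---is functorial in the formal disc and hence carries a compatible $\Aut\cO$-action. (A purely classical alternative is also available: compute the associated graded $\gr\cZ(\hfg_\crit)=(\gr\mathbb{V}_\crit)^{\fgbb}$, show it is a free differential algebra matching $\gr\bC[\Op_{\hG}]$, and lift generators; this uses a jet-algebra invariant computation in the spirit of Theorem~\ref{t:jets}.)

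Then I would match the two sides as $\Aut\cO$-modules. Both $\cZ(\hfg_\crit)$ and $\bC[\Op_{\hG}]$ are free differential algebras on $\ell$ generators of degrees $d_1+1,\dots,d_\ell+1$, where the $d_i$ are the exponents of $\fg$: on the Kac--Moody side the Segal--Sugawara central elements $S_1,\dots,S_\ell$ provide the generators, and on the oper side so do the coefficients $v_1,\dots,v_\ell$ in the unique canonical (Drinfeld--Sokolov) form $\partial_t+p_{-1}+\sum_{i=1}^{\ell}v_i(t)\,c_i$, the $c_i$ spanning a fixed $\ell$-dimensional complement that makes this form unique. The isomorphism of the previous step intertwines these presentations, so it suffices to check that under a coordinate change $w=\varphi(t)$ the $S_i$ transform the way the $v_i$ do. The decisive case is $S_1$, the quadratic Sugawara element: its modes generate a Virasoro algebra acting on $\hfg_\crit$ through the Lie algebra of coordinate changes, which forces $S_1$ to transform with the Schwarzian cocycle of a projective $\Sl_2$-connection; the higher $S_i$ then transform as the remaining coefficients of a $\hG$-oper. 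These are exactly the transformation laws built into $\Op_{\hG}$.

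Finally, since the Kac--Moody algebra $\hfg_{\kappa_\crit,x}$ at a point $x$ of a curve is defined without reference to a coordinate, and since the coordinate-wise isomorphisms above are compatible with the transition functions on both sides by the previous paragraph, they glue into a single canonical isomorphism of topological commutative algebras $\cZ(\hfg_\crit)\simeq\bC[\Op_{\hG}]$. I expect the main obstacle to be exactly the equivariance in the middle step: proving the Feigin--Frenkel screening theorem and verifying that the identification with the classical $\mathcal{W}$-algebra respects the $\Aut\cO$-action---equivalently, carrying out the transformation-law computation above. This is the technical heart of the matter, and it is handled through the vertex-algebra (BRST) cohomology of the Drinfeld--Sokolov reduction.
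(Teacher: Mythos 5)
Theorem~\ref{t:FF} is not proved in this paper at all: it is quoted from \cite{FF} and \cite{FrenkelBook} and then used as an input (via Corollary~\ref{c:rep}, Theorem~\ref{t:FFUltimate}, etc.). So there is no internal proof to compare against; what you have written is a reconstruction of the strategy of the cited sources, and as an outline it is faithful to that strategy. You correctly identify the three pillars: (1) realizing $\cZ(\hfg_\crit)$ from the commutative vertex algebra $\mathfrak{z}=(\bM_\crit(\bC))^{\fgbb}$; (2) the free-field/Wakimoto embedding into $\pi_0$, the identification of the image with the intersection of kernels of the screening operators, and the match with the classical $\mathcal{W}$-algebra $\simeq \bC[\Op_\hG]$ via Drinfeld--Sokolov, with the associated-graded computation (in the spirit of Theorem~\ref{t:jets} for infinite jets) supplying surjectivity; and (3) $\Aut(\cO)$-equivariance, with the quadratic Segal--Sugawara element transforming by the Schwarzian so that the generators assemble into an oper rather than a point of $(\fh^*/W)\llp t\rrp$. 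Two caveats. First, the proposal is a roadmap, not a proof: the screening theorem, the surjectivity onto the kernel of the screenings, and the equivariance of the whole chain are precisely the hard content of \cite{FrenkelBook}, and you only name them. Second, one phrase is imprecise: at the critical level the Segal--Sugawara modes do \emph{not} generate a Virasoro algebra acting on $\hfg_\crit$ (they become central); the Schwarzian transformation law for $S_1$ is obtained by computing the $\Der(\cO)$-action on the vertex algebra $\bM_\kappa(\bC)$ at noncritical $\kappa$ and specializing, or directly from the vertex-algebra axioms, not from an honest Virasoro action at $\kappa_\crit$.
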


\ssec{Compatibility theorem} \label{ss:compatibility} Our goal is to show that the isomorphisms of Theorems \ref{t:jets} and \ref{t:FF} are naturally compatible. 
 Let us first note that there exists a functor  $\fg_n-\modd\ra \hfg_\kappa-\modd$  defined by  
\begin{equation}\label{eq:bM}
V\mapsto \bM_\kappa(V):= \Ind_{\fgbb\oplus \bC.\bone}^{\hfg_\kappa} (V)
\end{equation} 
where $\fgbb$ acts on $M$ via the canonical map $\ev_n: \fgbb\ra \fg_n=\fgbb/t^n$ and $\bone$ acts as identity. It is natural to ask how the centre $\cZ(\hfg_\crit)$ acts on the module $\bM_\crit(V)$. The first result in this direction was obtained by Beilinson and Drinfeld. We now recall their theorem.

\sssec{} Following \cite[\S 3.8.8]{BD}, we define a $\hG$-oper (on the disk $\cD=\Spec \bC\llb t\rrb$) with singularities of order at most $n$ to be a $\hNbb$-conjugacy class of operators of the form 
\begin{equation}\label{eq:oper}
\nabla =\partial_t +  t^{-n}(p_{-1} +x(t)),\quad \quad x(t)=\sum_{i\geq 0} x_it^i \in \cfbbb
\end{equation} 
We let $\Op_{\hG}^{\ord_n}$ denote the scheme of such opers.  There is an injective  morphism $\Op_\hG^{\ord_n} \ra \Op_\hG$ sending an $\hN\llb t \rrb $-equivalence class of operators of the above form to the corresponding $\hN \llp t \rrp $-equivalence class. (In fact, this is how one endows $\Op_\hG$ with the structure of an ind-scheme.)

\begin{thm}\cite[Theorem 3.7.9]{BD} \label{t:BD} 
For every $V\in \fg_n-\modd$, the module $\bM_\crit(V)$ is scheme-theoretically supported on $\Op_{\hG}^{\ord_n}$; that is, the natural morphism $\cZ(\hfg_\crit)\ra \End_{\hfg_\crit}(\bM_\crit(V))$ factors through the quotient $\cZ(\hfg_\crit)\simeq \bC[\Op_\hG]\ra \bC[\Op_\hG^{\ord_n}]$. 
\end{thm}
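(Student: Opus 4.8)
The plan is to reduce the statement to the well-known ``regular'' case $n=0$ (i.e., the description of the center via the Feigin--Frenkel homomorphism together with the Wakimoto realization), and to exploit the explicit form of the central support on Wakimoto modules. Concretely, recall that for the vacuum Verma module $\mathbb{V}_\crit=\bM_\crit(\bC)$ one has the Feigin--Frenkel isomorphism $\cZ(\hfg_\crit)\simeq \End_{\hfg_\crit}(\mathbb{V}_\crit)\simeq\bC[\Op_{\hG}]$ of Theorem \ref{t:FF}, so the endomorphism-algebra action in the theorem is governed by a ring map $\bC[\Op_{\hG}]\ra\End_{\hfg_\crit}(\bM_\crit(V))$, and what must be shown is that this map annihilates the ideal $\cI_n\subset\bC[\Op_{\hG}]$ cutting out the closed sub-ind-scheme $\Op_\hG^{\ord_n}$. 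Since $\bM_\crit(V)=\Ind_{\fgbb\oplus\bC\bone}^{\hfg_\crit}(V)$ is generated over $\hfg_\crit$ by the $\fgbb$-submodule $V$, and $\cZ(\hfg_\crit)$ commutes with the $\hfg_\crit$-action, it suffices to check that $\cI_n$ acts by zero on the cyclic vectors, i.e.\ on $V\subset\bM_\crit(V)$; and because the relevant action factors through $U(\fgbb)$ acting on $V$ via $\ev_n$, everything in sight only sees the image of $\cI_n$ in the algebra of operators built from $\fg_n=\fgbb/t^n\fgbb$.

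First I would make the ideal $\cI_n$ explicit. Using the coordinate-dependent identification $\Op_{\hG}\simeq(\fh^*/W)\llp t\rrp$ and the canonical forms \eqref{eq:operDt}, \eqref{eq:oper}, the sub-ind-scheme $\Op_\hG^{\ord_n}$ of opers with a pole of order at most $n$ is carved out by the vanishing of the ``Laurent-tail'' functions: writing a generic function on $(\fh^*/W)\llp t\rrp$ (equivalently a generic oper) via its canonical-form coefficients, $\Op_\hG^{\ord_n}$ is the locus where the coefficient functions of sufficiently negative order vanish, after the normalization $t^{-n}(p_{-1}+x(t))$. So $\cI_n$ is topologically generated by an explicit, coordinate-dependent family of coefficient functionals. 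Next I would recall the key computational input, due to Feigin--Frenkel and recorded in Frenkel's book: under the Wakimoto realization $\mathbb{V}_\crit\hookrightarrow W_\crit$ (Wakimoto module over the vacuum), the generators $S_i[m]$ of $\cZ(\hfg_\crit)$ act on the vacuum vector with a prescribed order of vanishing in $t$, the Segal--Sugawara current having only a simple pole at worst on a weight-$\lambda$ Wakimoto module, and more generally the statement that the central elements, when applied to a vector annihilated by $t^{n}\fgbb$, produce something lying in $t^{-n}$ times a regular series. Equivalently: on any module induced from a $\fgbb$-module killed by $t^{n}\fgbb$, the central currents have pole of order at most $n$. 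This is exactly the assertion that $\cI_n$ acts by $0$, and it is proved by the same normal-ordering estimate that Beilinson--Drinfeld use in \cite[\S 3.7--3.8]{BD}: one checks it first for $V$ a Verma (or induced) module, where the Wakimoto/free-field computation applies verbatim, and then deduces the general case because every $V\in\fg_n-\modd$ is a quotient of a sum of induced modules, and scheme-theoretic support only grows under passing to sub- and quotient modules in the source.

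So the steps, in order, are: (1) translate ``scheme-theoretically supported on $\Op_\hG^{\ord_n}$'' into ``the ideal $\cI_n$ annihilates $\bM_\crit(V)$'', using Theorem \ref{t:FF}; (2) reduce to the cyclic vector $V\subset\bM_\crit(V)$ and thence to the $U(\fgbb)$-action factoring through $\fg_n$; (3) make $\cI_n$ explicit in the coordinate via the canonical oper form \eqref{eq:oper}; (4) invoke the Feigin--Frenkel / Wakimoto computation of the order of pole of the Segal--Sugawara currents acting on a vector killed by $t^{n}\fgbb$, reducing to the case $V$ induced, where the free-field realization gives the estimate directly; (5) propagate from induced $V$ to arbitrary $V$ by right-exactness of $\Ind$ and monotonicity of support. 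The main obstacle is step (4): one must control not just the leading Segal--Sugawara current but \emph{all} generators of $\cZ(\hfg_\crit)$ simultaneously and uniformly, and show the pole-order bound $n$ is sharp in the sense needed, which requires either a careful normal-ordering bookkeeping in the Wakimoto complex or, alternatively, an appeal to the filtration on $\cZ(\hfg_\crit)$ by the classical limit $\bC[\Op_\hG]\rightsquigarrow\bC[(\fh^*/W)\llp t\rrp]$ together with the Rais--Tauvel/Geoffriau description (Theorem \ref{t:jets}) of $\cZ(\fg_n)$ at the associated-graded level, where the pole-order statement becomes the transparent fact that $(\fh^*/W)_n$ sits inside $(\fh^*/W)\llp t\rrp$ as the $t^{-n}(\textrm{regular})$ locus. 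I expect the cleanest write-up to run the argument at the classical level first, via $\gr$, and then lift, since the symbol calculation is where the order-$n$ bound is manifestly visible.
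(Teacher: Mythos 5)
The paper does not actually prove this statement: it is quoted verbatim from \cite[Theorem 3.7.9]{BD}, and the author later points to \cite[Theorem 5.6.(1)]{Laredo} for a reproduction of the proof. So your plan has to be judged on its own terms. Its skeleton is the standard and correct one: centrality reduces the claim to showing that the topological generators of the ideal of $\Op_\hG^{\ord_n}$ kill the generating subspace $V\subset\bM_\crit(V)$, which is annihilated by $t^n\fgbb$; one then identifies those generators via the canonical form of opers and proves a pole-order estimate by a symbol/normal-ordering computation. That is essentially the Beilinson--Drinfeld argument.

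There are, however, two concrete problems. First, your quantitative bound is wrong: you assert that on a vector killed by $t^n\fgbb$ the central currents have ``pole of order at most $n$,'' and that this ``is exactly the assertion that $\cI_n$ acts by $0$.'' It is not. By Corollary \ref{c:rep}(i) the ideal of $\Op_\hG^{\ord_n}$ is topologically generated by $v_{i,m}$ with $m\geq n(d_i+1)$, so what must be proved is $S_{i,m}\cdot x=0$ for $m\geq n(d_i+1)$, i.e.\ the $i$-th current has pole order at most $n(d_i+1)$, not $n$. The factor $d_i+1=\deg\bP_i$ is the entire content of the theorem: each monomial of the symbol $\bP_{i,m}$ is a product of $d_i+1$ factors $\bJ^{a_j}_{k_j}$ with $\sum_j k_j=m-d_i$, so $m\geq n(d_i+1)$ forces some $k_j\geq n$, and after normal ordering that factor sits on the right and kills $x$. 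Aiming at pole order $n$ both proves a false statement and hides this arithmetic. (Note also that passing from the symbol to $S_{i,m}$ itself requires an induction on the PBW filtration to dispose of the lower-order terms; vanishing of the symbol alone does not kill the vector.) Second, the Wakimoto branch of your step (4) does not reach general $V$: an arbitrary $\fg_n$-module is a quotient of free modules $U(\fg_n)^{\oplus I}$, not of Verma modules, and the universal module $\bU_n=\Ind_{t^n\fgbb}^{\hfg_\crit}(\bC)$ is not a Wakimoto module, so the free-field formulas do not ``apply verbatim'' to it. In this paper the Wakimoto computation is in any case downstream of Theorem \ref{t:BD} (it is used for the refinement, Theorem \ref{t:main}), so you should commit to the symbol/normal-ordering route and check the generators of $\cI_n$ directly on the cyclic vector of $\bU_n$. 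A last small slip: scheme-theoretic support shrinks, rather than grows, under passage to sub- and quotient modules --- fortunately that is the direction your reduction actually needs.
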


\sssec{Higher residue} 
We now come to the key definition of the present text. To an oper of the form \eqref{eq:oper}, we associate its $n^\mathrm{th}$ residue 
\[
\Res_n (\nabla) := p_{-1}+\ev_n(x(t))=p_{-1}+ \sum_{i=0}^{n-1} x_i t^i,
\]
where $\ev_n: X\llb t \rrb \ra X_n$ denotes the canonical morphism. 
Under conjugation by an element $s \in \hN\llb t \rrb $, the residue gets conjugated by $\ev_n(s) \in \hN_{n}$. (Note that $s^{-1}\partial_t s$ is regular on the disk, and therefore, does not contribute to the higher residue.)
 Thus, the projection of $\Res_n(\nabla)$ onto 
$\cfg_n/\hG_n = \Spec(k[\cfg_n]^{\hG_n})$ is well-defined. According to \cite{RaisTauvel}, the latter algebraic variety is canonically isomorphic to $(\fh^*/W)_n$. Thus, we obtain that $\Res_n$ is a morphism
\[
\Res_n: \Op_{\hG}^{\ord_n} \ra (\fh^*/W)_n.
\]

\sssec{} The notion of residue of opers with regular singularity; i.e., when $n=1$ in \eqref{eq:oper}, is due to Beilinson and Drinfeld \cite[\S 3.8]{BD}. Our residue map $\Res_n$ is an obvious generalization of Beilinson and Drinfeld's residue. On the other hand, in \cite{Laredo}, the authors defined the residue of an oper \eqref{eq:oper} to be equal to $p_{-1}+x(0)\in \cfg/\hG=\fh^*/W$. Therefore, our residue map remembers more information than the one define in \cite{Laredo}; see \S \ref{sss:aside}.

\sssec{} 
For $\chi \in (\fh^*/W)_n$, we denote by $\Op_{G}^{\ord_n, \chi}$ the subscheme of opers with singularity at most $n$ and residue $\chi$. Let $\varpi: \fh^*_n\ra (\fh^*/W)_n$ denote the canonical morphism. Note that each $\Lambda\in \fh_n^*$ defines a point $\varpi(\Lambda)\in (\fh^*/W)_n$ and, therefore, a character of $\cZ(\fg_n)$. We are now ready to state our main result which was conjectured in \cite{oberwolfach}.  

\begin{thm}\label{t:main}
Suppose $Z(\fg_n)$ acts on $V$ by the character $\varpi(\Lambda)$. Then $\bM_\crit(V)$ is scheme-theoretically supported on the subscheme $\Op_{\hG}^{\ord_n, \varpi(-\Lambda)}\subset \Op_\hG$. 
\end{thm}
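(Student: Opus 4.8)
The plan is to reduce Theorem~\ref{t:main} to the combination of Theorem~\ref{t:BD} (which already tells us $\bM_\crit(V)$ is supported on $\Op_\hG^{\ord_n}$) and a precise calculation of how the central character of $V$ determines the residue of the supporting oper. Concretely, by Theorem~\ref{t:BD} the map $\cZ(\hfg_\crit)\to\End_{\hfg_\crit}(\bM_\crit(V))$ factors through $\bC[\Op_\hG^{\ord_n}]$, so it remains to show that this further factors through $\bC[\Op_\hG^{\ord_n,\varpi(-\Lambda)}]$, i.e.\ that the composite $\bC[(\fh^*/W)_n]\xrightarrow{\Res_n^*}\bC[\Op_\hG^{\ord_n}]\to\End_{\hfg_\crit}(\bM_\crit(V))$ equals the scalar action through the character $\varpi(-\Lambda)$. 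Since $V$ is assumed to have $\cZ(\fg_n)$-character $\varpi(\Lambda)$, everything comes down to identifying the composite
\[
\cZ(\hfg_\crit)\xrightarrow{\ \Res_n^*\ }\bC[(\fh^*/W)_n]\simeq\cZ(\fg_n)
\]
with the natural map and tracking a sign (or, more precisely, the shift by $p_{-1}$ together with the $\Lambda\mapsto -\Lambda$ flip). I expect the $n=1$ case to be exactly Beilinson--Drinfeld's computation of the residue of the oper attached to a Verma-type induced module, and the general $n$ case to be Geoffriau's jet-algebra analogue.

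The key steps, in order, would be: (1) Reduce to $V$ a suitable ``universal'' object — namely take $V$ to be the generalized Verma module $\bM_n(\Lambda)$ for $\fg_n$ (induced from a character $\Lambda$ of $\fh_n$ up through $\fb_n$), since $\bM_\crit$ of a general $V$ with that central character receives a map from, or maps to, $\bM_\crit(\bM_n(\Lambda))$, and central support can only shrink; alternatively work with the universal family over all of $\fh_n^*$ at once and specialize. (2) Identify $\bM_\crit(\bM_n(\Lambda))$ with a generalized Wakimoto module via the nontrivial morphism promised in the abstract, or compute directly: the point is that Wakimoto modules are built from a Heisenberg/free-field realization in which the action of $\cZ(\hfg_\crit)$ is transparent and given, after Feigin--Frenkel, by the ``Miura transform'' / canonical projection $\Op_\hG\to\Conn_\hH$ followed by reading off the connection. (3) Compute the connection underlying the Wakimoto module $\bW_\crit(\Lambda)$: it is $\partial_t + t^{-n}(p_{-1}+\cdots)$ with the $\cfh$-part of the polar term governed by $-\Lambda$ — this is where the sign enters, exactly as in the classical statement that the Wakimoto module of highest weight $\lambda$ corresponds to the oper with residue $-\lambda-\rho$ (the $\rho$-shift being invisible at the level of $\fh^*/W$ after the Harish-Chandra normalization). (4) Apply $\Res_n$ to this connection and use the Rais--Tauvel identification $\bC[\cfg_n]^{\hG_n}\simeq(\fh^*/W)_n$ to conclude the residue is $\varpi(-\Lambda)$. (5) Finally, bootstrap from the Verma/Wakimoto case to arbitrary $V$ with the given central character, using that $\cZ(\fg_n)$ acts on $V$ by $\varpi(\Lambda)$ and that $\bM_\crit$ is exact and compatible with the relevant quotients.

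The main obstacle I anticipate is Step~(3): pinning down the exact polar part of the Miura oper / connection attached to the generalized Wakimoto module for the \emph{truncated current} algebra $\fg_n$, including the correct sign and the fact that the higher Fourier modes $x_1,\dots,x_{n-1}$ of the oper match the higher jet components of $\Lambda$ under $\Res_n$. The subtlety is that, unlike the regular-singularity case, one must control not just the leading residue but the entire $n$-jet, and one must check that the nonlinear $\hN_n$-gauge action (through $\ev_n$) does not mix these in a way that spoils the identification — this is precisely the content of the remark in the excerpt that $s^{-1}\partial_t s$ is regular and so does not affect $\Res_n$, but turning that observation into a clean proof that $\Res_n\circ(\text{oper of }\bW_\crit(\Lambda))=\varpi(-\Lambda)$ as points of $(\fh^*/W)_n$ requires the jet-level Feigin--Frenkel/Wakimoto machinery. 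A secondary, more bookkeeping-type obstacle is reconciling the ``coordinate-independent'' statement of Theorem~\ref{t:FF} with the coordinate-dependent computations: one should either fix a coordinate $t$ throughout (legitimate, since $\Op_\hG^{\ord_n}$ and $(\fh^*/W)_n$ are both honest schemes and the morphism $\Res_n$ was defined coordinate-dependently) or check $\Res_n$ is in fact canonical, but for the proof of Theorem~\ref{t:main} the former suffices.
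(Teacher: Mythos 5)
Your steps (2)--(4) are essentially the paper's route for the Verma case: one maps $\bM_\crit(M(\Lambda))$ nontrivially to the generalized Wakimoto module $\bW_\crit(\Lambda)$ (Proposition \ref{p:morphism}), computes the central support of the Wakimoto module via the Miura transformation and the ultimate form of the Feigin--Frenkel isomorphism (Lemma \ref{l:Wakimoto}, giving $\varpi(-\Lambda-\rho_n)$), and compares with Geoffriau's computation that $\cZ(\fg_n)$ acts on $M(\Lambda)$ by $\varpi(\Lambda+\rho_n)$, so that the $\rho_n$-shifts cancel and only the sign survives. That part of your outline is sound and matches the paper.

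The genuine gap is in your steps (1) and (5), the reduction from arbitrary $V$ to the Verma case. A $\fg_n$-module $V$ on which $\cZ(\fg_n)$ acts by $\varpi(\Lambda)$ need not admit any nonzero map to or from $M(\Lambda)$ (already for $\fg=\Sl_2$, $n=1$, there are irreducible modules with neither highest nor lowest weight that have a prescribed central character but no nonzero Hom in either direction with any Verma module), so ``central support can only shrink along a morphism'' has nothing to act on; and exactness of $\bM_\crit$ is irrelevant, since a priori the action of $\cZ(\hfg_\crit)$ on $\bM_\crit(V)$ could depend on more of $V$ than its $\cZ(\fg_n)$-character. The missing ingredient is the paper's Corollary \ref{c:preserve}: working with the universal module $\bM_\crit(U(\fg_n))\simeq \Ind_{t^n\fgbb}^{\hfg_\crit}(\bC)$, one shows by a principal-symbol computation that the generators $S_{i,[nd_i+k]}$, $0\le k\le n-1$, act on the generating vector by their symbols $\bP_{i,nd_i+k}$ and hence preserve the subspace $U(\fg_n)$, landing in $\cZ(\fg_n)$. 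This is what produces a single algebra homomorphism $\bC[(\fh^*/W)_n]\to\cZ(\fg_n)$ through which the composite $\bC[(\fh^*/W)_n]\xrightarrow{\Res_n^*}\cZ^{\ord_n}\to\End(\bM_\crit(V))$ factors for \emph{every} $V$ simultaneously. Only then is it legitimate to identify that homomorphism by testing on Verma modules (whose central characters are dense in $\Spec\cZ(\fg_n)$ since $\varpi$ is dominant) and to conclude for arbitrary $V$. Without this step your argument establishes the theorem only for modules admitting a comparison morphism with a Verma module, not for all $V$.
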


\sssec{} If one has explicit formulas for the generators of $\cZ(\fg_n)$ and $\cZ(\hfg_\crit)$, then one may prove the above theorem by a direct computation.
For Lie algebras  of type A, the above formulas are given in \cite{MolevTakiff} and \cite{MolevA}, respectively. For types B, C, and D the formulas for generators of $\cZ(\hfg_\crit)$ are provided in \cite{MolevBCD}; however, we do not know of a references which discusses explicit generators for $\cZ(\fg_n)$ for these types. 
The advantage of our method is that it is independent of the type ; in particular, it is valid for exceptional types.

\sssec{} For $n=1$ the above theorem is due to E. Frenkel; see  \cite[\S Proposition 12.6]{FrenkelWakimoto} and \cite[\S 7.6]{BIBLE}. 
In \emph{loc. cit}, this result is proved using the properties of Wakimoto modules. In retrospect, and in view of discussions in \cite{Laredo}, one can give a proof without using the Wakimoto modules. Nevertheless, to establish the above theorem, we find it fruitful to follow the approach of \cite[\S 7.6]{BIBLE}. Namely, we introduce  generalized Verma  and Wakimoto modules and construct a nontrivial morphism  from the Verma to the Wakimoto. This morphism, together with the fact that it is easy to compute the action of $\cZ(\hfg_\crit)$ on Wakimoto modules, will enable us to compute the action of $\cZ(\hfg_\crit)$ on the generalized Verma module. The latter result will, in turn, allow us to establish the above theorem. 

\sssec{Aside: a more general notion of higher residue}\label{sss:aside}
 Let $m$ be a positive integer less than or equal to $n$. For an operator of the form \eqref{eq:oper},   define 
$\displaystyle \Res_n^m(\nabla):=p_{-1}+\sum_{i=0}^{m-1} x_i t^i$.  
Thus, we obtain a morphism 
\[
\Res_n^m: \Op_\hG^{\ord_n} \ra  \cfg_m/\hG_m\simeq (\fh^*/W)_m.
\]
If we take $m=0$, we recover the residue of \cite{Laredo} (mentioned above). 
We expect that a more general version of Theorem \ref{t:main} holds, where one uses $\Res_n^m$ instead of $\Res_n=\Res_n^n$. This would be a generalization of \cite[Theorem 5.6]{Laredo}. We do not discuss this issue in the present text.

\ssec{Generalized Verma and Wakimoto modules} \label{ss:Wakimoto} 
\sssec{Verma modules for jet algebras} \label{sss:Verma} 
We continue to use the notation of the previous subsection, see \S \ref{sss:notation}. Let $\Lambda:\fh_n\ra \bC$ be a character. We extend $\Lambda$ to a character of $\fb_n$ which is trivial on $\fn_n$. Let 
\begin{equation} 
M(\Lambda):=\Ind_{\fb_n}^{\fg_n} (\Lambda) 
\end{equation} 
 The module $M(\Lambda)$ is the Verma modules for $\fg_n$ associated to $\Lambda$. We refer the reader to \cite{Wilson} for more information on these modules.  
 
 \sssec{Central character} 
 Define $\rho_n\in \fh_n^*$ by
\begin{equation}
\rho_n(h_0,\cdots, h_{n-1})=n.\rho(h_0),
\end{equation} 
where $\rho\in \fh^*$ is the sum of simple roots of $\fg$. 
 Let $\varpi:\fh_n^*\ra (\fh^*/W)_n$ denote the canonical morphism. According to \cite[\S 4]{GeoffriauCenter}, under the isomorphism of Theorem \ref{t:jets},  
the centre $Z(\fg_n)$ acts on the Verma module $M(\Lambda)$ by the character $\varpi(\Lambda+\rho_n)\in \Spec(\cZ(\fg_n))$.

\sssec{Generalized Verma modules} Next, let 
\begin{equation} 
\bM_\kappa (\Lambda):= \bM_\kappa(M(\Lambda))= \Ind_{\fgbb\oplus \bC.\bone}^{\hfg_\kappa} (M(\Lambda)).
\end{equation} 
We call $\bM_\kappa(\Lambda)$ a \emph{generalized Verma modules} for the affine Kac-Moody algebra at level $\kappa$. We will now give an alternative construction of these modules. Let 
\[
\fI_n = \ev_n^{-1}(\fb_n), \quad \textrm{and} \quad \fI_n^\circ=\ev_n^{-1}(\fn_n),
\]
where $\ev_n: \fgbb\ra \fg_n$ is the canonical map. 
Note that if $n=1$, then $\fI_n$ is known both as the Iwahori subalgebra and the Borel subalgebra of $\hfg_\kappa$, depending on whether one wants to emphasize the analogy of $\hfg_\kappa$ with $p$-adic groups or with the usual semisimple Lie algebras. We think of $\fI_n$ as ``deeper Iwahori" subalgebras. The character $\Lambda$ defines a character of $\fI_n$ which is trivial on $\fI_n^\circ$. We have an isomorphism of $\hfg_\kappa$ modules 
\[
\bM_\kappa(\Lambda) \simeq \Ind_{\fI_n\oplus \bC.\bone}^{\hfg_\kappa} (\Lambda). 
\]
Our next goal is to relate these Verma modules to certain Wakimoto modules. To define the latter, we need some preparation.

\sssec{Representations of Weyl algebras} \label{sss:Weyl}
Let $\fn^*$ denote the dual of $\fn$. Using the residue pairing, we identity the restricted dual to $\fn\llp t \rrp$ with $\fn^*\llp t \rrp dt$. Thus, we have a canonical non-degenerate inner product on $\fn\llp t \rrp \oplus \fn^*\llp t \rrp dt$. We let $\Gamma$ denote the central of extension of $\fn\llp t \rrp \oplus \fn^*\llp t \rrp dt$ defined by this inner product. 
 The universal enveloping algebra $\cA$ of $\Gamma$ is known as the infinite dimensional  Weyl algebra; see \S\ref{sss:Weyl} for an explicit description of this algebra. Let $\Gamma_{+,n}$ denote the abelian Lie subalgebra $t^{n} \fn\llb t \rrb  \oplus t^{1-n}\fn^*\llb t \rrb dt $. Define 
\begin{equation} 
M_{\fg,n}:=\Ind_{\Gamma_{+,n}\oplus \bC.\bone}^{\Gamma} \, (\bC).
\end{equation} 
In physics literature, the module $M_{\fg,n}$ is sometimes known as the $\beta \gamma$-system of weights $(n, 1-n)$; see, for instance, \cite[Page 53]{Friedan}.

\sssec{Representations of Heisenberg algebras} 
Let $\hfh_\kappa$ denote the affine Kac-Moody algebra at level $\kappa$ associated to the commutative algebra $\fh$. The algebra $\hfh_\kappa$ is known as the affine Heisenberg algebra associated to $\fh$. By an abuse of notation, we let $\Lambda$ also denote the composition $\fhbb\ra \fh_n\rar{\Lambda}\bC$. 
Let 
\begin{equation}\label{eq:Wakimoto}
\pi_\kappa(\Lambda):=\Ind_{\fh\llb t \rrb \oplus \bC.\bone}^{\hfh_{\kappa}} (\Lambda)  
\end{equation} 
Note that the Lie algebra $\hfh_0$ is commutative; in fact, it is isomorphic to $\fhpp\oplus \bC$.

\sssec{Wakimoto modules}  
Let
\begin{equation} 
'\bW_\kappa(\Lambda):=M_{\fg, n} \otimes \pi_{\kappa-\kappa_\crit}(w_0(\Lambda)-2\rho_n), 
\end{equation} 
where $w_0$ is the longest element of the Weyl group $W$. 
Using the free field realization of Feigin and Frenkel \cite[\S 6.1.6]{FrenkelBook}, one can endow $'\bW_\kappa(\Lambda)$ with a canonical action of $\hfg_\kappa$. Following \cite[\S 9.5]{FrenkelBook}, we  twist the action of $\hfg_\kappa$ on $'\bW_\kappa(\Lambda)$ by $w_0$. Henceforth, $\bW_\kappa(\Lambda)$ denotes the module $'\bW_\kappa(\Lambda)$ equipped with this twisted action.\footnote{In more details, $w_0$ defines an involution of $\fg$ which maps $\fn$ to $\fn^-$ and sends $\check{\lambda}\in \fh$ to $w_0(\check{\lambda})$. Therefore, it also defines an involution of $\hfg_\kappa$. The action map $\hfg_\kappa\ra \End(\bW_\kappa(\Lambda))$ is defined to be the composition of this involution with the action map for $'\bW_\kappa(\Lambda)$.} 
We call $\bW_\kappa(\Lambda)$ a generalized Wakimoto module. In the case $n=1$; i.e., when $\Lambda=\lambda$ is a character of $\fh$, the Wakimoto modules defined above have been studied extensively. They are denoted by $\bW_{\kappa,\lambda}^{w_0}$ in \emph{op. cit.}.  

\sssec{Central support of Wakimoto modules}  
\begin{lem} \label{l:Wakimoto} 
The Wakimoto module $\bW_\crit(\Lambda)$ is scheme-theoretically supported on $\Op_{\hG}^{\ord_n, \varpi{(-\Lambda-\rho_n)}}$. 
\end{lem}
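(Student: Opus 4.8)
The plan is to compute the action of the center $\cZ(\hfg_\crit)$ on the Wakimoto module $\bW_\crit(\Lambda)$ directly, using the free field realization, and then identify the resulting character of $\bC[\Op_\hG^{\ord_n}]$ with the oper whose higher residue is $\varpi(-\Lambda-\rho_n)$. The key point is that under the Feigin--Frenkel isomorphism, the action of the center on a Wakimoto module factors through a homomorphism $\cZ(\hfg_\crit) \simeq \bC[\Op_\hG] \to \pi_0(\fh)$, which in the coordinate-dependent picture is the well-known \emph{Miura transformation} $\Op_\hG \to \Conn_{\hH}$ (connections on the $\hH$-bundle), followed by evaluation on the Heisenberg module $\pi_{0}(w_0(\Lambda) - 2\rho_n)$ appearing as the tensor factor of $\bW_\crit(\Lambda)$. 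So the first step is to recall (from \cite[\S 9.5]{FrenkelBook}, \cite[\S 7.6]{BIBLE}) that $\cZ(\hfg_\crit)$ acts on $\bW_\crit(\Lambda)$ through the character of the Heisenberg module, i.e.\ through the connection $\partial_t - \frac{w_0(\Lambda) - 2\rho_n}{?}$ twisted back by $w_0$; and that the Miura map realizes an oper $\nabla$ as the class of a connection $\partial_t + \clambda(t)$ with $\clambda(t)\in\cfh\llp t\rrp$ via $\nabla = \partial_t + p_{-1} + \clambda(t)$ gauge-equivalent form, or more precisely via the standard formula relating the oper canonical form to a Cartan-valued connection.

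Concretely, I would argue as follows. By construction $\bW_\crit(\Lambda) = M_{\fg,n}\otimes \pi_0(w_0(\Lambda) - 2\rho_n)$ with the $w_0$-twisted $\hfg_\crit$-action. The center acts only through the $\pi_0$-factor, and on $\pi_0(\mu)$ (for $\mu\in\fh_n^*$ extended to $\fhbb$) the generators $b_i[m]$ of the Heisenberg act by the ``constant'' values dictated by $\mu$ together with the critical shift. Translating through the free-field formulas, the center acts by the function on $\Op_\hG^{\ord_n}$ corresponding to the Miura oper whose underlying $\cfh$-connection is $\partial_t + t^{-n}(\mu(t) + (\textrm{shift}))$ where the shift is exactly $\rho_n$-type (this is where the $-2\rho_n$ in the definition and the $\rho_n$ in the Heisenberg twist combine; the $w_0$-twist converts $w_0(\Lambda)$ back to $\Lambda$ up to sign in the residue, since $w_0$ acts on $\fh^*/W$ trivially but on $\cfh$ by $-1$ composed with a Weyl element). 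The upshot is that the higher residue $\Res_n$ of this Miura oper, projected to $(\fh^*/W)_n$, equals $\varpi(-\Lambda-\rho_n)$. Because the Miura oper's residue projects to the same point of $(\fh^*/W)_n$ as the underlying oper's residue (the nilpotent part $p_{-1}$ contributes nothing to the $\hG_n$-orbit invariants, by the remark preceding the definition of $\Res_n$), this gives the claimed support.

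I would carry out the steps in this order: (i) recall the factorization of the center action on Wakimoto modules through the Heisenberg/$\pi_\kappa$ tensor factor, citing \cite[\S 9.5]{FrenkelBook}; (ii) recall the explicit form of the Miura transformation $\MT: \Conn_{\hH} \to \Op_\hG$ and, crucially, its behavior on opers with singularity of order at most $n$, i.e.\ that it carries a connection with pole of order $n$ and ``residue'' $\nu\in\fh_n^*$ to an oper in $\Op_\hG^{\ord_n}$ with $\Res_n$ projecting to $\varpi(\nu)$; (iii) match the parameter: the Heisenberg module entering $\bW_\crit(\Lambda)$ has parameter $w_0(\Lambda)-2\rho_n$, the critical shift in the free-field realization contributes $+\rho_n$ (or $+2\rho_n$ depending on normalization), and the $w_0$-twist of the action sends the resulting connection parameter $\nu$ to $-\nu$ up to $W$, so that $\varpi(\nu) = \varpi(-\Lambda-\rho_n)$; (iv) conclude via Theorem~\ref{t:FF} and the definition of $\Op_\hG^{\ord_n,\chi}$.

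The main obstacle I anticipate is step (iii): getting the normalizations exactly right. There are several competing sign and shift conventions --- the $-2\rho_n$ built into the definition of $'\bW_\kappa(\Lambda)$, the $\rho_n$-shift coming from the critical level (the difference between $\kappa$ and $\kappa-\kappa_\crit$ in \eqref{eq:Wakimoto}, reflecting that $\hat\rho$ appears in the free-field formulas at the critical level), and the action of $w_0$ on $\cfh$ --- and it is easy to be off by a $\rho_n$ or a sign. The cleanest way to pin this down is to specialize to $n=1$, where the statement must reduce to E.\ Frenkel's computation that $\bW_{\crit,\lambda}^{w_0}$ is supported on opers with residue $\varpi(-\lambda-\rho)$ (\cite[\S 12.6]{FrenkelWakimoto}, \cite[\S 7.6]{BIBLE}); matching to that known case fixes all constants, and the general-$n$ formulas are the evident $t$-graded refinement, so the $n=1$ check propagates. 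A secondary, more technical point is verifying that the Miura map indeed restricts to $\Conn_{\hH}^{\ord_n}\to\Op_\hG^{\ord_n}$ compatibly with $\Res_n$; this is a direct computation with the gauge transformation bringing $\partial_t + p_{-1} + \clambda(t)$ into canonical oper form, and one checks that the leading $t^{-n}$ behavior and the truncation mod $t^n$ of the Cartan part survive into the $\hG_n$-invariants, which is exactly the content of the remark that $s^{-1}\partial_t s$ does not contribute to the higher residue.
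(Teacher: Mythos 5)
Your plan is correct and follows essentially the same route as the paper: factor the central action through the Heisenberg tensor factor via the ultimate form of the Feigin--Frenkel isomorphism (Theorem \ref{t:FFUltimate}), restrict the Miura transformation to connections with pole of order $n$, track the higher residue, and undo the $w_0$-twist. The one normalization you leave open --- the extra $-\rho_n$ --- is exactly what the paper pins down in Lemma \ref{l:Miura} by the explicit gauge transformation by $\rho(t^n)$, which produces the term $-n\rho\, t^{n-1}=-\rho_n$; this is the ``direct computation'' you anticipate, and it removes the need for the $n=1$ bootstrap.
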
 
The proof of the above lemma is an easy application of the ultimate form of the Feigin-Frenkel's Isomorphism; see \S \ref{s:Miura}.

\sssec{From Verma to Wakimoto} The following proposition is a first step in understanding the relationship between generalized Verma and Wakimoto modules. 
 \begin{prop} \label{p:morphism}
For every $\Lambda\in \fh_n^*$, there exists a nontrivial homomorphism of $\hfg_\kappa$-modules 
\[
\bM_\kappa(\Lambda) \ra \bW_\kappa(\Lambda).
\]
\end{prop}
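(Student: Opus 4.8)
The plan is to construct the morphism by producing a suitable vector in $\bW_\kappa(\Lambda)$ and invoking the universal property (Frobenius reciprocity) that defines the generalized Verma module. Recall that
\[
\bM_\kappa(\Lambda)\simeq \Ind_{\fI_n\oplus \bC.\bone}^{\hfg_\kappa}(\Lambda),
\]
so by adjunction a nonzero homomorphism $\bM_\kappa(\Lambda)\to \bW_\kappa(\Lambda)$ is the same as a nonzero vector $\bw\in \bW_\kappa(\Lambda)$ on which $\bone$ acts as the identity, on which $\fI_n^\circ=\ev_n^{-1}(\fn_n)$ acts by zero, and on which the ``Cartan part'' $\fI_n/\fI_n^\circ$ acts by the character $\Lambda$; equivalently, $\bw$ must be a highest-weight-type vector for the deeper Iwahori $\fI_n$ with weight $\Lambda$. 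First I would fix the candidate vector: in the tensor product $\bW_\kappa(\Lambda)={}'\bW_\kappa(\Lambda)=M_{\fg,n}\otimes \pi_{\kappa-\kappa_\crit}(w_0(\Lambda)-2\rho_n)$ (before the $w_0$-twist), I take $\bw = \bv_0\otimes \bone_\pi$, where $\bv_0$ is the cyclic generating vector of the $\beta\gamma$-system $M_{\fg,n}=\Ind_{\Gamma_{+,n}\oplus\bC.\bone}^{\Gamma}(\bC)$ and $\bone_\pi$ is the generating vector of the Heisenberg module $\pi_{\kappa-\kappa_\crit}(w_0(\Lambda)-2\rho_n)$. After applying the $w_0$-twist this becomes the natural candidate lowest-weight vector for the original Borel data.

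The key computational step is to verify, using the explicit free-field (Wakimoto) formulas of \cite[\S 6.1.6, \S 9.5]{FrenkelBook}, that $\bw$ indeed satisfies the three conditions above. Concretely: the currents $e_\alpha(t)$ attached to positive roots act on $\bw$ through the $\beta$-operators (the ``annihilation'' half of the Weyl algebra), which kill $\bv_0$ by construction of $M_{\fg,n}$ as induced from $\Gamma_{+,n}$; one must check that all modes of $e_\alpha(t)$ lying in $\fI_n^\circ$ act by zero, using that $\Gamma_{+,n}=t^n\fn\llb t\rrb\oplus t^{1-n}\fn^*\llb t\rrb dt$ is precisely matched to the depth of $\fI_n^\circ$. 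The Cartan currents $h(t)$ act on $\bw$ by a sum of a contribution from the $\beta\gamma$-system (producing the $-2\rho_n$ shift, which is why the Heisenberg module is twisted by $-2\rho_n$ in the definition) and the scalar action on $\pi_{\kappa-\kappa_\crit}(w_0(\Lambda)-2\rho_n)$; after the $w_0$-twist these combine to give exactly the character $\Lambda$ on $\fI_n/\fI_n^\circ$. Finally $\bone$ acts as the identity because it does so on each tensor factor. Granting these, Frobenius reciprocity produces the desired nonzero map, and it is nontrivial because it sends the cyclic vector of $\bM_\kappa(\Lambda)$ to $\bw\neq 0$.

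The main obstacle I expect is bookkeeping the precise weight and twist conventions so that the three conditions hold on the nose — in particular tracking how the $w_0$-involution of \cite[\S 9.5]{FrenkelBook} interchanges $\fn$ and $\fn^-$, and confirming that the $-2\rho_n$ correction in the Heisenberg factor is exactly the anomaly produced by normal-ordering the Cartan currents in the free-field realization at the ``depth $n$'' level (the jet analogue of the usual $-2\rho$ shift for $n=1$). A secondary subtlety is that $\bW_\kappa(\Lambda)$ is an infinite tensor product / restricted-dual construction, so one should check that $\bw$ generates no convergence issues and that the action of the relevant modes of $\fI_n^\circ$ is genuinely by zero rather than merely locally nilpotent; this follows from smoothness of $\bW_\kappa(\Lambda)$ together with the matching of gradings, but it deserves an explicit line. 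Once $n=1$ is recognized as the classical statement (the map $\bW_{\kappa,\lambda}^{w_0}$ receiving the Verma module, \cite[\S 6.1.6]{FrenkelBook}), the general case is a depth-shifted version of the same argument.
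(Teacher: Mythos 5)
Your overall strategy is the same as the paper's: reduce by Frobenius reciprocity to showing that the vacuum vector $\bw_n=\bu_n\otimes\bv_n$ of $\bW_\kappa(\Lambda)$ is an eigenvector for the deeper Iwahori $\fI_n$ with eigenvalue $\Lambda$, and verify this using the explicit free-field formulas. Your account of the Cartan part (the $-2\rho_n$ shift arising because the $m=0$ mode of $:a_\beta^*a_\beta:$ has eigenvalue $n$ on $\bu_n$, combined with the $w_0$-twist) is essentially the paper's computation.

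There is, however, a genuine gap in your treatment of the nilpotent part. You assert that all modes of $e_\alpha(t)$ lying in $\fI_n^\circ$ kill the vacuum because they act ``through the annihilation half of the Weyl algebra, matched to the depth of $\Gamma_{+,n}$.'' But $\Gamma_{+,n}=t^n\fn\llb t\rrb\oplus t^{1-n}\fn^*\llb t\rrb\,dt$ only gives $a_{\alpha,m}.\bu_n=0$ for $m\geq n$, whereas $\fI_n^\circ=\ev_n^{-1}(\fn_n)$ contains all of $\fn\llb t\rrb$, in particular the modes $0\leq m<n$, for which $a_{\alpha,m}.\bu_n\neq 0$. This is exactly why the $w_0$-twist is not a bookkeeping issue but the crux of the argument: after twisting, the action of $\fn_n$ is carried by the \emph{untwisted} $f$-currents, whose free-field formula is not ``$a_\alpha(z)$ plus corrections'' but the full expression $\sum_\beta :Q^\alpha_\beta(a^*)a_\beta: + \sum_\beta\tilde Q^\alpha_\beta(a^*)\partial_z a^*_\beta + b_\alpha a^*_\alpha+\sum_i b_iR_i^\alpha(a^*)$. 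The zeroth mode of the first sum acts on $\bu_n$ by the scalar $n\sum_\beta c_\beta$, where $c_\beta$ is the coefficient of $a_\beta^*$ in $Q^\alpha_\beta$; this does not vanish term by term, and its vanishing requires the identity $\sum_\beta c_\beta=0$, which the paper extracts from the known $n=1$ fact that $\fn^-$ annihilates $\bu_1$ (Corollary \ref{c:basic}). One also needs the cross-term analysis (Lemma \ref{l:basic2}) showing that $[b_i(z)R_i^\alpha(a^*(z))]_m$ and $[b_\alpha(z)a_\alpha^*(z)]_m$ kill $\bu_n\otimes\bv_n$ for $m\geq 0$ because in each summand one tensor factor or the other is annihilated. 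Without these two ingredients your verification fails already for the zeroth modes, i.e.\ precisely on $\fn_n\subset\fI_n^\circ$, which is the only part of the statement that is not formal.
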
 
To prove Proposition \ref{p:morphism}, it is enough to prove that the vacuum vector of  $\bW_\kappa(\Lambda)$ is an eigenvector for $\fI_n$ with eigenvalue $\Lambda$. We prove this by an explicit computation, using the  formulas for the action of $\hfg_\kappa$ on $\bW_\kappa(\Lambda)$ given in \cite[Proposition 3.1]{Fedorov}. It may also be possible to give a more conceptual proof of the above lemma, following the reasoning of \cite[\S 11.5]{BIBLE} for the $n=1$ case.

\sssec{}  
In analogy with \cite[\S 9.5]{FrenkelBook}, one can define a Wakimoto module $\bW_\kappa^w(\Lambda)$ for every $w\in W$. (We have only considered the case $w=w_0$). We expect that, at least for generic $\Lambda$, the Verma module
 $\bM_\crit(\Lambda)$ is isomorphic to some Wakimoto module $\bW_\crit^w(\Lambda)$. 
Our motivation is two-fold. On the one hand, 
in the case that $n=1$, one knows that every Verma module is isomorphic to some Wakimoto module; see \cite[\S 11]{BIBLE} and \cite[Proposition 9.5.1]{FrenkelBook}.  On the other hand, the proposed isomorphism between the generalized Verma and Wakimoto modules is a Kac-Moody analogue of an isomorphism due to Bushnell and Kutzko and Roche in the $p$-adic setting. In more details, Wakimoto modules are the analogues of parabolic induction and Verma modules are the analogues of induction from compact open subgroups. The aforementioned result of Bushnell and Kutzko and Roche is an isomorphism between certain parabolically induced representations and their compactly induced counterparts; see, for instance, \cite{Roche}, \cite{BK}, \cite{MasoudTravis1}, and \cite{MasoudTravis2}. 

For $n>1$, we are presently unable to check if the morphism of Proposition \ref{p:morphism} is an isomorphism for any $\Lambda\in \fh_n^*$. The essential difficulty is that we don't know how to define the character of the generalized Verma or Wakimoto modules; thus, we have no way of comparing their sizes.


 \section{Opers with irregular singularities }

\ssec{Recollections on canonical representatives} 
In this subsection, we recall some facts about opers. We continue to use our notation from \S \ref{sss:notation}.
\sssec{}  Let $p_1$ denote the unique element of degree 1 in $\cfn$ such that $\{p_{-1}, 2\rho, p_1\}$ is an $\Sl_2$-triple. Let 
\[
V_\can:=\bigoplus_{i\in E} V_{\can, i}
\]
denote the $\ad\, p_1$-invariants in $\cfn$, decomposed according to the principal gradation. Here $E=\{d_1, \cdots, d_\ell\}$ is the set of exponents of $\cfg$ (so $d_1=1$). As mentioned in \cite[\S 1.3]{BIBLE}, it follows from a theorem of Kostant that the composition 
\begin{equation} \label{eq:Kostant} 
V_\can \rar {v\mapsto v+p_{-1}} \cfg\ra \cfg/\hG \ra \fh^*/W
\end{equation}  
is an isomorphism.

\sssec{}
Choose linear generators $p_j\in V_{\can, d_j}$. If the multiplicity of $d_j$ is greater than one, then we choose linearly independent vectors in $V_{\can, j}$. The following lemma is due to Drinfeld and Sokolov \cite{DS}; see also \cite[Lemma 4.2.2]{FrenkelBook}.

\begin{lem}\label{l:DS} 
The conjugation action of $\hN\llp t\rrp$ on the space of operators of the form \eqref{eq:operDt} is free and each conjugacy class contains a unique operator of the form $\nabla = \partial_t + p_{-1} + v(t)$, where $v(t) \in V_\can\llp t\rrp$, so we can write $\displaystyle v(t) = \sum_{i=1}^\ell v_j(t) p_j$. 
\end{lem}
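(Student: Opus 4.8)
The plan is to carry out the Drinfeld--Sokolov gauge-fixing procedure degree by degree with respect to the principal gradation, the only substantial input being the $\Sl_2$-decomposition $\cfg = \ker(\ad p_1)\oplus[p_{-1},\cfg]$ that already underlies \eqref{eq:Kostant}. First I would record the graded set-up: write $\cfg=\bigoplus_i\cfg_i$ for the principal gradation, so $\cfb=\bigoplus_{i\ge0}\cfg_i$, $\cfn=\bigoplus_{i\ge1}\cfg_i$, and $\deg p_{-1}=-1$. Since the kernel of $\ad p_{-1}$ on $\cfg$ consists of lowest weight vectors for the principal $\Sl_2$, it lies in strictly negative principal degrees; hence $\ad p_{-1}$ is injective on $\cfb$, a fortiori on each $\cfn_{i+1}$. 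Intersecting the $\Sl_2$-decomposition with $\cfb_i:=\cfg_i$ ($i\ge0$) gives $\cfb_i=V_{\can,i}\oplus[p_{-1},\cfn_{i+1}]$, so $\ad p_{-1}$ induces an isomorphism $\cfn_{i+1}\to\cfb_i/V_{\can,i}$ of finite-dimensional vector spaces, which remains an isomorphism after $\otimes_{\bC}\bC\llp t\rrp$. All of this is Kostant's theorem, which I would cite rather than reprove.

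For existence, starting from $\nabla=\partial_t+p_{-1}+x(t)$ with $x(t)=\sum_{i\ge0}x_i(t)$, $x_i(t)\in\cfb_i\llp t\rrp$, I would prove by induction on $i$ that $\nabla$ can be brought, by conjugation by $\hN\llp t\rrp$, to a form whose components of degree $<i$ lie in $V_\can\llp t\rrp$. In the inductive step one conjugates by $\exp(g_{i+1}(t))$ with $g_{i+1}(t)\in\cfn_{i+1}\llp t\rrp$; because $\hN$ is unipotent and brackets with $\cfn_{i+1}$ raise degree by at least $i+1$, such a conjugation alters $\nabla$ only in degrees $\ge i$, and its effect in degree $i$ is to add $[g_{i+1}(t),p_{-1}]$. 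The isomorphism $\cfn_{i+1}\llp t\rrp\to(\cfb_i/V_{\can,i})\llp t\rrp$ then lets me choose $g_{i+1}(t)$ uniquely so as to move $x_i(t)$ into $V_{\can,i}\llp t\rrp$ without disturbing the already-normalized lower degrees. Since $\cfn_i=0$ for all sufficiently large $i$, the induction terminates after finitely many steps, producing $\nabla=\partial_t+p_{-1}+v(t)$ with $v(t)\in V_\can\llp t\rrp$.

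Uniqueness and freeness I would settle together by a lowest-degree-term argument. Suppose $S=\exp(g)\in\hN\llp t\rrp$, $g=\sum_{j\ge1}g_j(t)\in\cfn\llp t\rrp$, conjugates one canonical operator $\partial_t+p_{-1}+v(t)$ into another $\partial_t+p_{-1}+v'(t)$. Expanding $S\nabla S^{-1}-\nabla=(e^{\ad g}-1)(p_{-1}+v(t))-(\partial_t S)S^{-1}$ and letting $j_0$ be the least index with $g_{j_0}(t)\ne0$, one checks that every term on the right has principal degree $\ge j_0$ except $[g_{j_0}(t),p_{-1}]$, which has degree $j_0-1$. Hence the degree-$(j_0-1)$ component of $v'(t)-v(t)$ equals $[g_{j_0}(t),p_{-1}]$; but the left-hand side lies in $V_{\can,j_0-1}\llp t\rrp$ while the right-hand side lies in $[p_{-1},\cfn_{j_0}]\llp t\rrp$, and $V_\can\cap[p_{-1},\cfn]=0$, forcing $[g_{j_0}(t),p_{-1}]=0$, hence $g_{j_0}(t)=0$ by injectivity of $\ad p_{-1}$ on $\cfn$ --- contradicting minimality of $j_0$. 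Thus $S=1$ and therefore $v'(t)=v(t)$. Taking $v'=v$, the same computation shows the stabilizer of every canonical operator is trivial; since, by the existence part, every operator of the form \eqref{eq:operDt} is conjugate to a canonical one, all stabilizers are trivial and the action is free.

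The conceptual content is entirely in Kostant's $\Sl_2$-decomposition; the only real work is bookkeeping --- verifying the degree estimates in the inductive step (that conjugation by a degree-$(i+1)$ element leaves the already-normalized lower-degree components untouched) and in the lowest-degree argument (that the term $(\partial_t S)S^{-1}$ and the higher brackets cannot cancel the leading bracket $[g_{j_0}(t),p_{-1}]$). That bookkeeping is where I expect the only minor subtleties to arise.
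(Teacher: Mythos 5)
Your argument is correct and is exactly the standard Drinfeld--Sokolov gauge-fixing proof that the paper points to (it cites \cite{DS} and \cite[Lemma 4.2.2]{FrenkelBook} rather than reproving the lemma): the graded decomposition $\cfb_i = V_{\can,i}\oplus[p_{-1},\cfn_{i+1}]$ from Kostant's theorem, inductive normalization by $\exp(g_{i+1}(t))$, and the lowest-degree-term argument for freeness and uniqueness. Nothing to add.
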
 

\sssec{} It follows from the above lemma that every oper is represented by an $\ell$-tuple $(v_1, \cdots, v_\ell)$ where 
\[
v_i(t) = \sum_{n\in \bZ} v_{i,n} t^{-n-1} \in \bC\llp t\rrp.  
\] 
Therefore, the topological algebra $\bC[\Op_{\hG}]$ is isomorphic to the completion of the polynomial algebra in the variables $v_{i,n}$, $i=1,\cdots, \ell$, $n\in \bZ$, with respect to the topology in which the base of open neighborhoods of zero is formed by the ideals generated by $v_{i,n}$, $n>N$

\sssec{} The analogue of the above lemma also holds for $\Op_\hG^{\ord_n}$; namely, the action of $\hN\llb t\rrb$ on the space of operators of the form \eqref{eq:oper} is free and each conjugacy class contains a unique operator of the form $\nabla = \partial_t + t^{-n} (p_{-1}+v(t))$ where now $v(t)\in V_\can\llb t\rrb$. Equivalently, every element $\nabla \in \Op_\hG^{\ord_n}$ has a canonical representative of the form 
\[
\nabla = \partial_t + t^{-n} \left( p_{-1} + \sum_{j=1}^\ell c_j(t) p_j\right), \quad \quad c_j(t) \in \bC\llb t\rrb. 
\]

\sssec{} We will now investigate the map $\Op_\hG^{\ord_n}\ra \Op_\hG$, following \cite[Proposition 4.2]{Laredo}. Applying the conjugation by $\rho(t)^{-n}$ to the operator $\nabla$, we obtain 
\[
\partial_t + p_{-1} + n\rho t^{-1} + \sum_{j=1}^\ell t^{-n(d_j+1)} c_j(t) p_j.
\]
Next, applying conjugation by $\exp(-np_1/2t)$, we obtain the operator 
\[
\partial_t+ p_{-1}+ \left(t^{-n-1} c_1(t) - \frac{n^2-2}{4} t^{-2}\right) p_1 + \sum_{j>1} t^{-n(d_j+1)} c_j(t) p_j . 
\]

\sssec{} The following result is  essentially due to Drinfeld and Sokolov \cite[Proposition 3.8.9]{DS}.

\begin{prop} \label{p:DS}
The natural morphism $\Op_\hG^{\ord_n} \ra \Op_\hG$ is 
injective. Its image consists of opers whose canonical representatives  have the form 
\[
\nabla = \partial_t + p_{-1} + \sum_{i=1}^\ell t^{-n(d_j+1)} u_j(t) p_j,\quad \quad u_j(t) \in \bC\llb t\rrb. 
\] 
Moreover,
\[
\Res_n(\nabla) = \left(\ev_n(u_1(t)) + \frac{n^2-2}{4} t^{n-1}\right)p_1 + \sum_{j>1} \ev_n(u_j(t)) \in (V_\can)_n\simeq (\fh^*/W)_n
\]
where the last isomorphism is given by \eqref{eq:Kostant}. 
\end{prop}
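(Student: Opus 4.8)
The plan is to verify the two assertions of Proposition~\ref{p:DS} in turn: first that the image of $\Op_\hG^{\ord_n} \ra \Op_\hG$ is exactly the locus of canonical representatives of the stated shape, and then to read off the formula for $\Res_n$ by tracking the conjugations that were performed in the two preceding paragraphs of the excerpt.

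For the first part, I would start from an arbitrary $\nabla = \partial_t + t^{-n}(p_{-1} + v(t))$ with $v(t) \in V_\can\llb t \rrb$, i.e.\ from the canonical form for $\Op_\hG^{\ord_n}$ recorded above, and apply exactly the sequence of gauge transformations displayed in the excerpt: conjugate by $\rho(t)^{-n}$ (using that $\ad\rho$ acts on $p_j \in V_{\can,d_j}$ by $d_j$ and on $p_{-1}$ by $-1$, with the $\partial_t$ contributing the $n\rho t^{-1}$ term), and then conjugate by $\exp(-np_1/2t)$ to clear the $t^{-1}$-term; this is where the shift $-\tfrac{n^2-2}{4}t^{-2}$ in the $p_1$-coefficient appears, via $[\,p_1, p_{-1}\,] = 2\rho$ together with the $\partial_t$ acting on $\exp(-np_1/2t)$. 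After these two steps one is left with $\partial_t + p_{-1} + \sum_j t^{-n(d_j+1)} u_j(t) p_j$ for suitable $u_j \in \bC\llb t\rrb$ (with $u_1$ differing from $c_1$ by the explicit $t$-power term). Conversely any operator of this form is manifestly $\hN\llp t\rrp$-gauge equivalent — running the two conjugations backwards — to one of the form \eqref{eq:oper}, so it lies in the image; and since by Lemma~\ref{l:DS} the canonical representative in $V_\can\llp t\rrp$ is \emph{unique}, this description of the image is exact. Injectivity of $\Op_\hG^{\ord_n}\ra\Op_\hG$ is then immediate, since a point of the source is recovered from its canonical representative in the target by the inverse change of variable.

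For the second part, I would combine the definition $\Res_n(\nabla) = p_{-1} + \ev_n(x(t))$ with the identification, via \eqref{eq:Kostant} and Lemma~\ref{l:DS}, of $(\fh^*/W)_n$ with $(V_\can)_n$: the residue of an oper in $\Op_\hG^{\ord_n}$, which a priori lives in $\cfg_n/\hG_n$, is computed by putting the $\ell$-jet $p_{-1} + \ev_n(x(t))$ into canonical $V_\can$-form. Concretely, I would take the canonical representative $\nabla = \partial_t + t^{-n}(p_{-1} + \sum_j c_j(t) p_j)$, note that the coefficient function appearing in \eqref{eq:oper} is $x(t) = \sum_j c_j(t)p_j$ so naively $\Res_n$ would be $p_{-1} + \sum_j \ev_n(c_j(t))p_j$, and then observe that passing to the canonical form in $\Op_\hG$ replaces $c_1$ by $u_1$ with $\ev_n(c_1(t)) = \ev_n(u_1(t)) + \tfrac{n^2-2}{4}t^{n-1}$ (the $t$-powers beyond degree $n-1$ being killed by $\ev_n$); the coefficients $c_j$ for $j>1$ are unchanged in the relevant jet because the $\exp(-np_1/2t)$-conjugation only affects the $p_1$-line. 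This yields the displayed formula. The one point needing a small argument is that the gauge transformations used are genuinely by elements of $\hN\llb t\rrb$ rather than $\hN\llp t\rrp$ after the renormalization — but $\rho(t)^{-n}$ is not in $\hN$, so what one actually checks is that the \emph{net} effect on $\Res_n$ of the allowed $\hN\llb t\rrb$-conjugations is trivial (as already recorded in the higher-residue subsection), while the $\rho(t)^{-n}$ and $\exp(-np_1/2t)$ conjugations are bookkeeping that translates between the two normal forms and whose effect on the $p_1$-coefficient is the explicit shift above.

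The main obstacle I anticipate is purely computational bookkeeping: correctly tracking how $\partial_t$ interacts with the conjugating elements $\rho(t)^{-n}$ and $\exp(-np_1/2t)$ — in particular getting the constant $\tfrac{n^2-2}{4}$ right, which comes from combining the $\partial_t\bigl(\exp(-np_1/2t)\bigr)$ term (contributing $+\tfrac{n}{2}t^{-2}p_1$) with the cross-term from conjugating $n\rho t^{-1}$ by $\exp(-np_1/2t)$ (contributing $-\tfrac{n^2}{2}t^{-2}\,[\tfrac12 p_1,\,\rho]$-type terms) and the conjugation of $p_{-1}$ — and making sure the powers of $t$ land so that $\ev_n$ indeed only sees $t^{n-1}$ and lower. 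There is no conceptual difficulty beyond Lemma~\ref{l:DS}, Kostant's theorem \eqref{eq:Kostant}, and the Rais--Tauvel identification; everything else is following \cite[Proposition 4.2]{Laredo} and \cite[Proposition 3.8.9]{DS} with the jet truncation kept in view.
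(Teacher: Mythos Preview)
Your proposal is correct and follows exactly the paper's approach: the paper's proof of Proposition~\ref{p:DS} is not written out in a separate proof environment but consists precisely of the two conjugations by $\rho(t)^{-n}$ and $\exp(-np_1/2t)$ displayed in the paragraph immediately preceding the proposition, together with the citation to Drinfeld--Sokolov for injectivity, and your write-up simply makes that argument explicit. Your observation that $\rho(t)^{-n}$ and $\exp(-np_1/2t)$ are not in $\hN\llb t\rrb$ and serve only as bookkeeping between the two canonical forms (while $\Res_n$ itself is well-defined on $\hN\llb t\rrb$-orbits by the earlier remark) is a useful clarification the paper leaves implicit.
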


\sssec{} Let us identify $\bC[\Op_\hG]$ with the completion of a polynomial algebra generated by $\{v_{i,m}\}$ where $i=1,\cdots, \ell$, and $m\in \bZ$. The following corollary follows immediately from the above proposition.

\begin{cor}  \label{c:rep}
\begin{enumerate} 
\item[(i)] The ideal corresponding to the quotient $\bC[\Op_\hG]\ra \bC[\Op_\hG^{\ord_n}]$ is topologically generated by the elements $v_{i,n_i}$, where $n_i\geq n(d_i+1)$. 
\item[(ii)] The algebra  $\bC[\Op_\hG^{\ord_n}]$ is isomorphic to the polynomial algebra generated by 
$v_{i,n_i}$,  where $n_i< n(d_i+1)$. 
\item[(iii)] The algebra $\cR_n\subseteq \bC[\Op_\hG^{\ord_n}]$ generated by the elements 
$v_{i,n_i}$, $nd_i \leq n_i<n(d_i+1)$, is isomorphic to $\bC[(V_\can)_n]\simeq \bC[(\fh^*/W)_n]$. Moreover, the resulting embedding 
$\bC[(\fh^*/W)_n] \ra \bC[\Op_\hG^{\ord_n}]$
is the homomorphism $\Res_n^*$.  
\item[(iv)] The injective homomorphism $\Res_n^*: \bC[(\fh^*/W)_n]\ra \Op_\hG^{\ord_n}$ is an isomorphism onto $\cR_n$. 
 \end{enumerate} 
 \end{cor}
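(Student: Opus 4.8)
The plan is to deduce Corollary \ref{c:rep} directly from Proposition \ref{p:DS} together with the description of $\bC[\Op_\hG]$ as a completed polynomial ring in the canonical coordinates $v_{i,m}$. First I would unwind what Proposition \ref{p:DS} says coordinate-wise: an oper lies in the image of $\Op_\hG^{\ord_n}\ra \Op_\hG$ if and only if its canonical representative has the form $\partial_t + p_{-1} + \sum_i t^{-n(d_i+1)}u_i(t)p_i$ with $u_i(t)\in \bC\llb t\rrb$. Writing $v_i(t) = \sum_{m} v_{i,m} t^{-m-1}$ and $u_i(t) = \sum_{k\geq 0} u_{i,k} t^k$, the relation $v_i(t) = t^{-n(d_i+1)} u_i(t)$ translates into $v_{i,m} = u_{i,\,m+1-n(d_i+1)}$, so that $v_{i,m}$ is a free coordinate (an ``$u$''-coordinate) exactly when $m+1-n(d_i+1)\geq 0$, i.e. $m \geq n(d_i+1)-1$, and is forced to vanish otherwise. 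This is precisely statements (i) and (ii): the kernel ideal is topologically generated by the $v_{i,m}$ with $m < n(d_i+1)-1$ --- wait, one must be careful with the off-by-one in the indexing convention $v_i(t)=\sum v_{i,n}t^{-n-1}$; I would simply fix the convention once and read off that the vanishing coordinates are those $v_{i,n_i}$ with $n_i \geq n(d_i+1)$ and the surviving ones are those with $n_i < n(d_i+1)$, matching the statement as written. Since the quotient map kills a topological ideal generated by a subset of the polynomial generators and the remaining generators are algebraically independent, $\bC[\Op_\hG^{\ord_n}]$ is the (honest, non-completed) polynomial algebra on the surviving $v_{i,n_i}$.

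Next, for (iii) and (iv) I would isolate the finitely many coordinates $v_{i,n_i}$ with $nd_i \leq n_i < n(d_i+1)$; there are exactly $n$ such indices for each $i$, hence $n\ell = \dim (V_\can)_n$ of them in total, which is reassuring. Under the substitution $u_i(t) = t^{n(d_i+1)} v_i(t)$ these correspond to the coefficients $u_{i,0},\dots,u_{i,n-1}$ of $\ev_n(u_i(t))$, so the subalgebra $\cR_n$ they generate is a polynomial algebra on $n\ell$ variables, abstractly isomorphic to $\bC[(V_\can)_n]$. The content of (iii) is that this abstract isomorphism is the dual of $\Res_n$. Here I would invoke the explicit formula for $\Res_n(\nabla)$ in Proposition \ref{p:DS}:
\[
\Res_n(\nabla) = \Bigl(\ev_n(u_1(t)) + \tfrac{n^2-2}{4}t^{n-1}\Bigr)p_1 + \sum_{j>1}\ev_n(u_j(t)).
\]
Reading this as a morphism $\Op_\hG^{\ord_n}\ra (V_\can)_n$, the pullback $\Res_n^*$ sends the coordinate functions on $(V_\can)_n$ to the $u_{i,k}$ up to the affine shift by $\tfrac{n^2-2}{4}$ in the $p_1$-component (which only affects one coordinate and only by an additive constant). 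An affine shift of a polynomial coordinate is still a polynomial automorphism of the coordinate subalgebra, so $\Res_n^*$ lands in $\cR_n$, and it is visibly surjective onto $\cR_n$ and injective (it's a triangular change of polynomial generators). That gives (iv): $\Res_n^*$ is an isomorphism $\bC[(\fh^*/W)_n]\xrightarrow{\sim}\cR_n$, where we use \eqref{eq:Kostant} to identify $(V_\can)_n$ with $(\fh^*/W)_n$.

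I expect the only real subtlety --- not a deep obstacle, but the step most likely to generate sign/index errors --- is the bookkeeping with the two competing indexing conventions: the ``$v_i(t)=\sum_n v_{i,n}t^{-n-1}$'' convention for coordinates on $\Op_\hG$, the truncation $\ev_n$ landing in degrees $0,\dots,n-1$, and the factor $t^{-n(d_i+1)}$ from Proposition \ref{p:DS}; keeping these straight is what pins down the exact inequality ranges $n_i \geq n(d_i+1)$ in (i) and $nd_i \leq n_i < n(d_i+1)$ in (iii). Everything else is formal: the corollary is genuinely ``immediate'' from Proposition \ref{p:DS} once one commits to a convention, since it is just the observation that a closed subscheme cut out by setting some polynomial coordinates to zero has coordinate ring the polynomial ring on the rest, and that a graded/filtered piece of that ring is identified --- via an explicit affine-linear change of variables --- with $\bC[(V_\can)_n]$.
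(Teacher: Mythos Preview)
Your approach is correct and matches the paper's, which simply declares the corollary ``follows immediately from the above proposition'' with no further argument; you have supplied exactly the coordinate unwinding that makes this immediacy explicit. One small remark: your first pass at the index matching $v_{i,m}=u_{i,\,m+1-n(d_i+1)}$ has the sign inverted (the correct relation is $v_{i,m}=u_{i,\,n(d_i+1)-m-1}$, so the \emph{surviving} coordinates are those with $m<n(d_i+1)$ and the \emph{vanishing} ones are those with $m\geq n(d_i+1)$), but you immediately flag the discrepancy and land on the correct inequalities, so the final argument is sound.
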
 

\ssec{Principal symbols of the central elements} 

\sssec{}\label{sss:enveloping}
 Let $U_\kappa(\hfg):=U(\hfg_\kappa)/(\bone-1)$. Define a linear topology on $U_\kappa(\hfg)$ by using as the basis of neighborhoods of identity the following left ideals:
\[
I_N=U_\kappa(\hfg) (\fg\otimes t^N \bCbb), \quad \quad N\geq 0. 
\] 
Let $\tU_\kappa(\hfg)$ denote the completion of $U_\kappa(\hfg)$ with respect to this topology. Equivalently $\tU_\kappa(\hfg)$ is the inverse limit of $U_\kappa(\hfg)/I_N$. The category of $\tU_\kappa(\hfg)$-modules is then canonically identified with the category of smooth modules over $\hfg$ at level $\kappa$. In particular, the Feigin-Frenkel centre $\cZ(\hfg_\crit)$ identifies with the centre of the topological associative algebra $\tU_\crit(\hfg)$. 

\sssec{} Next, note that PBW Theorem defines a canonical filtration on $\tU_\kappa(\hfg)$, and therefore, on $\cZ$.   To describe the associated graded, we need some notation. Choose a basis $\{ J^a\}$ for $\fg$ and let $\{\bJ^a\}$ denote the dual basis for $\fg^*$.
By Chevalley's restriction theorem, the algebra $\bC[\fg^*]^\fg$ is isomorphic to the polynomial algebra $\bC[\bP_i]_{i=1,\cdots, \ell}$, where $\bP_i$ may be chosen to be homogeneous of degree $d_i+1$. Let $\bP_{i,m}\in \bC[\fg^*\llp t\rrp]$ denote the polynomials defined by 
\[
\bP_{i} (\bJ^a(z)) =\sum_{m\in \bZ} \bP_{i,m} z^{-m-1}, 
\]
where $\displaystyle \bJ^a(z)=\sum_{m\in \bZ} \bJ_m^a.z^{-m-1}$ and $\bJ_m^a$ are generators of $\bC[\fg^*\llp t\rrp]$, defined by the formula 
\[
\bJ_m^a(\phi(t))= \Res_{t=0} \langle \phi(t), J^a\rangle t^m dt, \quad \quad \phi(t) \in \fg^*\llp t\rrp, n\in \bZ. 
\]

\sssec{} Combining results of Feigin and Frenkel \cite{FF} and Beilinson and Drinfeld \cite{BD}, one can show that the associated graded  of $\cZ_\crit$ is isomorphic to a completion of the polynomial algebra 
\[
\bC[\bP_{i,m}], \quad i=\{1,\cdots, \ell\},\quad m\in \bZ,
\]
see, for instance, \cite[\S 5.2]{Laredo}. Let $S_{i,m}\in \cZ(\hfg_\crit)$ denote the image of $v_{i,m}$ under the Feigin-Frenkel Isomorphism $\bC[\Op_\hG] \simeq \cZ(\hfg_\crit)$. Then, one the principal symbol of $S_{i,m}$ equals $\bP_{i,m}$.

\sssec{} For the rest of this section, we let $\cZ:=\cZ(\hfg_\crit)$. 
Let $\cZ^{\ord_n}$ denote the quotient of $\cZ(\hfg)$ corresponding to $\bC[\Op_\hG^{\ord_n}]$ under the Feigin-Frenkel Isomorphism $\cZ \simeq \bC[\Op_\hG]$. By Corollary \ref{c:rep}, the algebra $\cZ^{\ord_n}$ is isomorphic to a polynomial algebra on 
\begin{equation} \label{eq:central1}
S_{i,[m]},\quad i=1,\cdots, \ell,\quad m < n(d_i+1).
\end{equation}

\sssec{} We now consider the action of $S_{i,[m]}$ on the the universal module $\bU_n:=\bM_\crit(U(\fg_n))$. 
Note
\[
\bU_n:=\bM_\crit(U(\fg_n)) \simeq \Ind_{t^n \fgbb}^{\hfg_\crit} (\bC). 
\] 
In particular, the generating vector $x_n\in \bU_n$ satisfies the property $t^n\fgbb.x_n=0$. As mentioned above $S_{i,[m]}$ equals $\bP_{i,m}$ plus lower order terms. 

\begin{prop} 
\begin{enumerate} 
\item[(i)] The elements $S_{i,[m]}$, $m\geq n(d_i+1)$ annihilate $x_n$. 
\item[(ii)] The elements $S_{i,[nd_i+k]}$, $k=0, \cdots, n-1$, act on $x_n$  by their symbol $\bP_{i,nd_i+k}$. 
\end{enumerate} 
\end{prop}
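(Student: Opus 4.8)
The plan is to analyze the action of $S_{i,[m]}$ on $x_n$ through its principal symbol $\bP_{i,m}\in\bC[\fg^*\llp t\rrp]$, using the fact that the associated graded of $\cZ$ is a (completed) polynomial algebra in the $\bP_{i,m}$ and that the module $\bU_n$ carries a compatible PBW filtration with $\gr\bU_n\simeq\bC[\fg^*\llp t\rrp/t^{-n}\fg^*\llb t\rrb]=\bC[(\fg^*)_n]$ (the classical limit of $\Ind_{t^n\fgbb}^{\hfg_\crit}\bC$). Under this identification the generating vector $x_n$ maps to $1$, and the action of a central element on $x_n$ is controlled at top filtered degree by the image of its symbol in $\bC[(\fg^*)_n]$. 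So the key point is: $\bP_{i,m}$ maps to $0$ in $\bC[(\fg^*)_n]$ precisely when $m\geq n(d_i+1)$, and otherwise it maps to a nonzero generator.

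For part (i), I would argue as follows. We know (Corollary \ref{c:rep}(i) transported through the Feigin--Frenkel isomorphism, together with Theorem \ref{t:BD}) that $S_{i,[m]}$ for $m\geq n(d_i+1)$ already act as $0$ on the quotient $\bU_n$ — this is exactly the statement that $\bU_n$ is supported on $\Op_\hG^{\ord_n}$. Indeed, $\bU_n=\bM_\crit(U(\fg_n))$ and $U(\fg_n)\in\fg_n\text{-}\modd$, so Theorem \ref{t:BD} says the $\cZ$-action on $\bU_n$ factors through $\cZ^{\ord_n}=\bC[\Op_\hG^{\ord_n}]$, and by \eqref{eq:central1} the classes $S_{i,[m]}$ with $m\geq n(d_i+1)$ map to $0$ there. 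In particular they annihilate $x_n$. (If one prefers a self-contained argument: $\bP_{i,m}=\Res_{t=0}\bP_i(\bJ^a(z))z^{m}dz$ involves only the coordinates $\bJ^a_{m'}$ with total $t$-degree forcing at least one factor $\bJ^a_{m'}$ with $m'\geq n$ when $m\geq n(d_i+1)$, because $\bP_i$ is homogeneous of degree $d_i+1$; such factors act by $0$ on $x_n$ since $t^n\fgbb.x_n=0$. One then checks the lower-order correction terms in $S_{i,[m]}-\bP_{i,m}$ are likewise expressible in the $\bJ^a_{m'}$ with $m'\geq n$, e.g.\ by a degree/filtration bookkeeping on the grading operator.)

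For part (ii), the elements $S_{i,[nd_i+k]}$ with $0\le k\le n-1$ correspond under Corollary \ref{c:rep}(iii) to generators $v_{i,nd_i+k}$ of the subalgebra $\cR_n\simeq\bC[(\fh^*/W)_n]$. I would show that each such $S_{i,[nd_i+k]}$ acts on $x_n$ by a scalar times $x_n$ is \emph{not} what is claimed — rather, it acts by its symbol $\bP_{i,nd_i+k}$, meaning: under the PBW filtration on $\bU_n$, $S_{i,[nd_i+k]}\cdot x_n$ lies in filtration degree $d_i+1$ and its symbol in $\gr\bU_n=\bC[(\fg^*)_n]$ equals the image of $\bP_{i,nd_i+k}$, which is a nonzero element (it is, up to the isomorphism $(\fg^*)_n/\hG_n\simeq(\fh^*/W)_n$, a coordinate function $v_{i,k}$ on the $n$-jet space). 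This is the statement that the symbol map $\gr\cZ^{\ord_n}\to\gr\bU_n$ sends $\bP_{i,nd_i+k}\mapsto$ the corresponding jet coordinate, which follows because $\gr\bU_n\simeq\bC[(\fg^*)_n]$ is exactly the target of the evaluation $\ev_n$ applied to $\bC[\fg^*\llp t\rrp]$, and $\ev_n(\bP_{i,m})\neq 0$ iff $m<n(d_i+1)$. More precisely one observes that $\bP_{i,nd_i+k}$, as a polynomial in the $\bJ^a_{m'}$, can be written using only $\bJ^a_{m'}$ with $m'<n$ (plus terms involving $\bJ^a_{m'}$ with $m'\geq n$ which vanish on $x_n$); hence its action on $x_n$ factors through the surjection onto $\bC[(\fg^*)_n]$ and gives precisely $\bP_{i,nd_i+k}$ acting in the symbol.

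The main obstacle I anticipate is controlling the \emph{lower-order terms} $S_{i,[m]}-\bP_{i,m}$: a priori these are elements of lower filtration degree in $\tU_\crit(\hfg)$, and one must verify they do not spoil the vanishing in (i) or the symbol computation in (ii). The clean way to handle this is to use the $\bGm$-action (loop rotation / the grading operator $L_0$), under which $S_{i,m}$ is homogeneous of degree $m$ — or rather, to use that both $\cZ^{\ord_n}$ and $\bU_n$ are graded by the $t$-degree and the stated identifications $\cZ^{\ord_n}\simeq\bC[\Op^{\ord_n}_\hG]$, $\gr\bU_n\simeq\bC[(\fg^*)_n]$ are $\bGm$-equivariant; then homogeneity forces the correction terms in $S_{i,[m]}$ to lie in the ideal generated by $\fg\otimes t^n\bCbb$ whenever $m\geq n(d_i+1)$, giving (i), and to reduce to $\bP_{i,m}$ modulo that ideal when $m<n(d_i+1)$, giving (ii). So the proof reduces to a grading/homogeneity bookkeeping argument built on top of Corollary \ref{c:rep} and Theorem \ref{t:BD}, with no new hard input.
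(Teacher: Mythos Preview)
Your approach is essentially the paper's. For (i), both you and the paper invoke Theorem~\ref{t:BD}; the paper points to \cite[Theorem~5.6.(1)]{Laredo} for a direct proof, which is exactly the symbol-degree count you sketch parenthetically. For (ii), the paper does not argue at all but cites \cite[Lemma~3.9]{Laredo}, remarking that the reasoning given there for $k=n-1$ applies verbatim to all $0\le k\le n-1$; your PBW-plus-$L_0$-grading outline is the right shape for that argument.

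One caveat on your treatment of the lower-order terms in (ii). A PBW-degree-$r$ correction to $S_{i,[nd_i+k]}$ is $L_0$-homogeneous of the same weight as the symbol, so its monomials satisfy $\sum_j m_j=(n-1)d_i+k$. For $k\ge 1$ this already exceeds $r(n-1)$ whenever $r\le d_i$, forcing some $m_j\ge n$, and your argument goes through. But for $k=0$ and $r=d_i$ one can take every $m_j=n-1$; weight-counting alone does not exclude a $\fg$-invariant degree-$d_i$ polynomial in the variables $J^a_{n-1}$ from surviving in $\gr_{d_i}\bU_n$ (and such invariants can exist, e.g.\ $\bP_1(J^a_{n-1})$ when $d_i=2$). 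Closing this boundary case requires the finer structural input of \cite[Lemma~3.9]{Laredo} on how the correction terms actually arise, not bare homogeneity.
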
 

\begin{proof}
Part (i) is a restatement of Theorem \ref{t:BD}; see \cite[Theorem 5.6.(1)]{Laredo}  for a reproduction of the proof. Part (ii) also follows from \cite[Lemma 3.9]{Laredo}. For $k=n-1$ this is discussed in the proof Theorem 5.6 of \emph{loc. cit}. The reasoning employed there applies to all $0\leq k\leq n-1$. 
\end{proof} 

As an immediate corollary of Part (ii), we have: 
\begin{cor} \label{c:preserve} 
The elements $S_{i,[nd_i+k]}$, $k=0, \cdots, n-1$, preserve $U(\fg_n) \subset \bM(U(\fg_n))$. 
\end{cor}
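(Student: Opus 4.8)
The plan is to reduce the statement to a single assertion about the vacuum vector $x_n\in\bU_n:=\bM_\crit(U(\fg_n))$ and then to read it off from the shape of the symbols $\bP_{i,m}$. First I would recall that, under the canonical inclusion, the copy of $U(\fg_n)$ inside $\bU_n$ is precisely $U(\fgbb)\cdot x_n$: the $\fgbb$-action on $x_n$ factors through $\ev_n\colon\fgbb\ra\fg_n$ because $t^n\fgbb$ annihilates $x_n$, so $U(\fgbb)\cdot x_n=U(\fg_n)\cdot x_n$. Then, since each $S_{i,[m]}$ is central in $\tU_\crit(\hfg)$, it commutes with the $U(\fgbb)$-action, so for $u\in U(\fgbb)$ we have $S_{i,[m]}\cdot(u\cdot x_n)=u\cdot(S_{i,[m]}\cdot x_n)$. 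It therefore suffices to show that $S_{i,[nd_i+k]}\cdot x_n\in U(\fg_n)\cdot x_n$ for $k=0,\dots,n-1$: writing $S_{i,[nd_i+k]}\cdot x_n=u_0\cdot x_n$ with $u_0\in U(\fgbb)$, one gets $S_{i,[nd_i+k]}\cdot(u\cdot x_n)=(uu_0)\cdot x_n\in U(\fgbb)\cdot x_n=U(\fg_n)\cdot x_n$.

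The next step uses Part (ii) of the preceding Proposition, which gives $S_{i,[nd_i+k]}\cdot x_n=\bP_{i,nd_i+k}\cdot x_n$; so it remains to analyse which modes occur in $\bP_{i,nd_i+k}$. Expanding $\bP_i(\bJ^a(z))=\sum_m\bP_{i,m}z^{-m-1}$ with $\bJ^a(z)=\sum_{p\in\bZ}\bJ^a_p z^{-p-1}$ shows that $\bP_{i,m}$ is a sum of monomials $\bJ^{a_1}_{p_1}\cdots\bJ^{a_r}_{p_r}$ having $r\leq d_i+1$ factors and mode sum $p_1+\dots+p_r=m-d_i$; for $m=nd_i+k$ this mode sum equals $(n-1)d_i+k$. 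When such a monomial is fully reordered so as to act on $x_n$, every term that does not annihilate $x_n$ has all its modes $\leq n-1$ (a mode $\geq n$, once on the right, kills $x_n$) and still has at most $d_i+1$ factors with the same mode sum (the commutator corrections preserve the mode sum while decreasing the number of factors). But then, if some $p_{j_0}\leq -1$, we would have $p_1+\dots+p_r\leq -1+(r-1)(n-1)\leq d_i(n-1)-1<(n-1)d_i+k$ since $k\geq 0$, contradicting the mode sum. Hence every surviving term has all its modes in $\{0,1,\dots,n-1\}$, and these span $\fg_n$ under $\ev_n$; therefore $\bP_{i,nd_i+k}\cdot x_n\in U(\fg_n)\cdot x_n$, which finishes the argument.

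The substantive content is really already in Part (ii) of the Proposition (that the lower-order terms of $S_{i,[nd_i+k]}$ act by zero on $x_n$); granting that, the corollary amounts to the combinatorial inequality above, so I do not expect a serious obstacle. The one point that needs a little care is the handling of the normal-ordering corrections generated when a monomial of $\bP_{i,m}$ is reordered against $x_n$, but this is harmless precisely because the mode count applies uniformly to any monomial with at most $d_i+1$ factors and mode sum $(n-1)d_i+k$: each correction either produces a mode $\geq n$ and dies on $x_n$, or again lands in $U(\fg_n)\cdot x_n$. Two remarks on the hypotheses are worth making in the write-up: the condition $k\geq 0$ is essential, since for $m<nd_i$ the symbol $\bP_{i,m}$ genuinely involves the creation mode $J^a_{-1}\notin\fgbb$ and the corresponding $S_{i,[m]}$ need not preserve $U(\fg_n)$; whereas $k\leq n-1$ only serves to place $S_{i,[nd_i+k]}$ among the generators of $\cZ^{\ord_n}$ in \eqref{eq:central1}, for $k\geq n$ the element $S_{i,[nd_i+k]}$ already annihilating $x_n$ by Part (i).
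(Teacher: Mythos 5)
Your argument is correct and is exactly the route the paper intends: the paper derives the corollary as an ``immediate'' consequence of Part (ii) of the preceding proposition, and your centrality reduction to the vector $x_n$ together with the mode-count on the monomials of $\bP_{i,nd_i+k}$ (at most $d_i+1$ factors, mode sum $(n-1)d_i+k$, modes $\geq n$ killing $x_n$) is precisely what makes that immediacy precise. No discrepancies to report.
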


\ssec{Proof of Theorem \ref{t:main}}  
Let $V\in \fg_n-\modd$. In this section, we write $\bM(V)$ for the module $\bM_\crit(V)\in \hfg_\crit-\modd$. 
We already know that the action of $\cZ$ on $\bM(V)$ factors through $\cZ^{\ord_n}$. Next, observe that 
 $\bC[(\fh^*/W)_n]$  acts on $\bM(V)$ by the composition 
\begin{equation}\label{eq:action}
 \bC[(\fh^*/W)_n]\rar{\Res_n^*} \bC[\Op^{\ord_n}]\simeq \cZ^{\ord_n}\ra \End_{\hfg}(\bM(V)).
 \end{equation} 
  According to Corollaries \ref{c:rep} and \ref{c:preserve}, the action map \eqref{eq:action} comes from a homomorphism 
\begin{equation}\label{eq:action}  
\bC[(\fh^*/W)_n]\rar{\Res_n^*} \cR_n \simeq \cS_n \ra \cZ(\fg_n)\ra \End_{\fg_n} (V) \ra \End_{\hfg}(\bM(V)).
\end{equation}
In particular, this means that under this action $\bC[(\fh^*/W)_n]$ acts on a Verma module $\bM(\Lambda)$ by a character, a fact that is not obvious without Corollary \ref{c:preserve}. 
Let us compare the above homomorphism $\bC[(\fh^*/W)_n] \ra \cZ(\fg_n)$ with the Harish-Chandra's Isomorphism for jet algebras given in Theorem \ref{t:jets}.

 \sssec{} To do the comparison, it is enough to consider the Verma modules $V=M(\Lambda)$. As mentioned in \S \ref{sss:Verma}, Geoffriau has proved that $\cZ(\fg_n)\simeq \bC[(\fh^*/W)_n]$ acts on $M(\Lambda)$ via the character $\varpi(\Lambda+\rho_n)$ \cite[\S 4]{GeoffriauCenter}.  Now by Lemma \ref{l:Wakimoto}, the character of $\bC[(\fh^*/W)_n]$ acting on $\bW(\Lambda)$ via 
\[
  \bC[(\fh^*/W)_n] \simeq \cR_n \simeq  \cS_n \subseteq \cZ^{\ord_n}\ra \End_{\hfg}(\bW(\Lambda)),
\]
  is given by $\varpi(-\Lambda-\rho_n)$. By Proposition \ref{p:morphism}, we have a nontrivial homomorphism $\bM(\Lambda)\ra \bW(\Lambda)$; hence, $\bC[(\fh^*/W)_n]$ acts on both modules by the same character. Thus, we conclude that $\bC[(\fh^*/W)_n]$ acts on $\bM(\Lambda)$ by the character $\varpi(-\Lambda-\rho_n)$.  
  
  \sssec{} 
  Thus, the homomorphism $\bC[(\fh^*/W)_n] \ra \cZ(\fg_n)$ of \eqref{eq:action} equals the composition of the  Harish-Chandra Isomorphism $\bC[(\fh^*/W)_n] \ra \cZ(\fg_n)$ and the automorphism of $\bC[(\fh^*/W)_n]$ induced by $\varpi(\Lambda)\ra \varpi(-\Lambda)$. (Note that $\varpi: \fh_n^*\ra (\fh^*/W)_n$ is a dominant morphism, so indeed there is a unique such automorphism of $(\fh^*/W)_n$.)
 Therefore, if $\cZ(\fg_n)$ acts on $V$ by the character $\varpi(\Lambda)$, then $\bC[(\fh^*/W)_n]$ acts, via \eqref{eq:action}, on $\bM(V)$ by the character $\varpi(-\Lambda)$. In other words, $\bM(V)$ is scheme-theoretically supported on $\cZ^{\ord_n, \varpi(-\Lambda)}$, as required.


\section{Irregular connections and Miura Transformation} \label{s:Miura}
In this section, we use the ultimate form of the Feigin-Frenkel Isomorphism to compute the central support of generalized Wakimoto modules; in particular, we will prove Lemma \ref{l:Wakimoto}. We continue to use the notation of the introduction, see \S \ref{sss:notation}. 

\ssec{Ultimate form of the Feigin-Frenkel Isomorphism} 
\sssec{} Let $\Omega^{\rho}$ denote the $\hH$-bundle defined as the push-forward of the $\bC^\times$-bundle corresponding to the canonical line bundle $\Omega$ on the punctured disk $\cDt$ under the homomorphism $\rho: \bC^\times\ra \hH$. We let $\Conn_\hH(\Omega^{\rho})$  denote the space of connections on $\Omega^{\rho}$.  A choice of coordinate $t$ on the disk gives rise to a trivialization of $\Omega$, and hence of $\Omega^{\rho}$. A connection on $\Omega^{\rho}$ may then be written as an operator 
\begin{equation} \label{eq:miura}
\nabla = \partial_t + u(t), \quad \quad u(t) \in \fh^* \llp t \rrp .
\end{equation}

\sssec{} The \emph{Miura Transformation} is defined to be the morphism 
\[
\MT: \Conn_\hH(\Omega^{\rho}) \ra \Op_{\hG}
\]
 which sends an operator of the form \eqref{eq:miura} to the $\hN \llp t \rrp $-conjugacy class of $p_{-1}\plus \nabla$. We refer the reader to \cite[\S 9.3.1]{FrenkelBook} for more details on this map.

\sssec{} Similarly, we define the $\hH$-bundle $\Omega^{-\rho}$  and connections on this bundle. Choosing a local coordinate, we can write these connections as 
\begin{equation} \label{eq:miura2} 
\nabla = \partial_t + b(t), \quad \quad b(t) \in \fh^* \llp t \rrp .
\end{equation} 
In particular, the algebra $\bC[\Conn_\hH(\Omega^{-\rho})]$ is isomorphic to $\fhpp dt$. Thus, smooth $\fhpp$-modules are in bijection with smooth modules over $\bC[\Conn_\hH(\Omega^{-\rho})]$.

\sssec{Ultimate form of the Feigin-Frenkel Isomorphism} Let $M$ be a smooth  $\Gamma$-module and let $R$ be a smooth  $\bC[\Conn_\hH(\Omega^{-\rho})]$-module. Using the free field realization formalism, one can define a structure of a $\hfg_\crit$-module on $M\otimes R$ \cite[\S 6.1.6]{FrenkelBook}.
We Let $\theta: \Conn_\hH(\Omega^{-\rho})\ra \Conn_\hH(\Omega^{\rho})$ denote the isomorphism given, in local coordinate, by  
\begin{equation}\label{eq:theta}
\theta(\partial_t+b(t))= \partial_t -u(t)
\end{equation}

\begin{thm}\cite[Theorem 8.3.3]{FrenkelBook} \label{t:FFUltimate} 
 \begin{enumerate} 
 \item[(i)] The action of $\cZ(\hfg_\crit)$ on $M\otimes R$ does not depend on $M$ and factors through a homomorphism $\cZ(\hfg_\crit) \ra \bC[\Conn_\hH(\Omega^{-\rho})]$ and the action of $\bC[\Conn_\hH(\Omega^{-\rho})]$ on $R$. 
 \item[(ii)] The corresponding morphism $\Conn_\hH(\Omega^{-\rho})_{\cDt}\ra \Spec(\cZ(\hfg_\crit))$ fits into a commutative diagram 
\[ 
\xymatrix{
\Conn_\hH(\Omega^{-\rho}) \ar[d] \ar[r]^\theta & \Conn_\hH(\Omega^{\rho}) \ar[d]^{\MT} \\
\Spec(\cZ(\hfg_\crit)) \ar[r] & \Op_{\hG}. 
}
\]
\end{enumerate} 
\end{thm}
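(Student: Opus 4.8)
\emph{Remark on what is needed.} Theorem \ref{t:FFUltimate} is quoted from \cite[Theorem 8.3.3]{FrenkelBook}, so no proof is really owed here; what follows is a sketch of the strategy behind it. The plan is to reduce everything to two inputs: the free field (Wakimoto) construction, realized as a genuine homomorphism of topological associative algebras, and the Feigin--Frenkel description of $\cZ(\hfg_\crit)$ as a joint kernel of screening operators. Concretely, one builds a continuous homomorphism
\[
w_\kappa\colon\ \tU_\kappa(\hfg)\ \longrightarrow\ \widetilde{\cA}\,\widehat{\otimes}\,\tU_{\kappa-\kappa_\crit}(\hfh)
\]
by letting $\fgpp$ act by vector fields on the loop space of the big cell of the flag variety, rewriting those vector fields in a coordinate chart as elements of the completed Weyl algebra $\widetilde{\cA}$, and absorbing the resulting anomaly into the Heisenberg factor; pulling $M\otimes R$ back along $w_\crit$ is exactly the $\hfg_\crit$-action in the statement. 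At the critical shift the Heisenberg cocycle vanishes, so $\tU_0(\hfh)$ is commutative and is canonically (a completion of) $\bC[\Conn_\hH(\Omega^{-\rho})]$; the sign $u\mapsto -u$ and the $\rho$-twist of the bundle are precisely the bookkeeping encoded in $\theta$ (and, for $\bW_\kappa(\Lambda)$, in the $-2\rho_n$ shift and the $w_0$-twist built into the module). Everything thus reduces to analysing the composite $\cZ(\hfg_\crit)\hookrightarrow\tU_\crit(\hfg)\rar{w_\crit}\widetilde{\cA}\,\widehat{\otimes}\,\tU_0(\hfh)$.

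The crux is the containment $w_\crit(\cZ(\hfg_\crit))\subseteq 1\,\widehat{\otimes}\,\tU_0(\hfh)$, an identity in the target algebra prior to any choice of $M$ or $R$. To prove it one realizes $\cZ(\hfg_\crit)$ inside the vacuum module $V_\crit(\fg)$, which $w_\crit$ carries into $M_\fg\otimes\pi_0$, the tensor product of the vacuum $\beta\gamma$-system $M_\fg$ (acted on by $\fn$) and the Heisenberg Fock space $\pi_0$; by Feigin--Frenkel one has $\cZ(\hfg_\crit)=\bigcap_i\ker S_i$ for the screening operators $S_i$, and a vanishing theorem for the cohomology of the complex of screening operators forces this joint kernel into $1\otimes\pi_0$. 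Granting this, part (i) is immediate: the $\cZ(\hfg_\crit)$-action on $M\otimes R$ does not see $M$ and factors as $\cZ(\hfg_\crit)\to\bC[\Conn_\hH(\Omega^{-\rho})]$ followed by the $R$-action. For part (ii) one identifies the resulting morphism $\varphi\colon\Conn_\hH(\Omega^{-\rho})\to\Spec\cZ(\hfg_\crit)=\Op_\hG$ with $\MT\circ\theta$: on associated graded $w_\crit$ degenerates to the classical Poisson free field map, under which the symbol $\bP_{i,m}$ of the central generator $S_{i,m}$ is carried to the pullback under $\MT\circ\theta$ of the oper coordinate $v_{i,m}$; that the two morphisms coincide, and not merely their associated graded, is then a $\bGm$-equivariance (conformal weight) matching of homogeneous components, carried out explicitly in \cite[\S 8.3]{FrenkelBook}.

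The main obstacle is exactly that containment $w_\crit(\cZ(\hfg_\crit))\subseteq 1\,\widehat{\otimes}\,\tU_0(\hfh)$: it packages the whole technical core of Feigin--Frenkel theory --- the screening-operator description of the centre together with the cohomological vanishing that makes the joint kernel purely bosonic --- and there appears to be no route to Theorem \ref{t:FFUltimate} that bypasses it.
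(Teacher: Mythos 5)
The paper does not prove this theorem --- it is quoted verbatim from \cite[Theorem 8.3.3]{FrenkelBook} and used as a black box --- and your sketch faithfully reproduces the strategy of that reference: the Wakimoto homomorphism $w_\crit$ into the completed Weyl--Heisenberg algebra, the screening-operator description of $\cZ(\hfg_\crit)$ forcing its image into the purely bosonic factor $1\,\widehat\otimes\,\tU_0(\hfh)$, and the identification of the induced map with $\MT\circ\theta$ via symbols and conformal-weight matching. This is essentially the same approach as the cited source, and your identification of the containment $w_\crit(\cZ(\hfg_\crit))\subseteq 1\,\widehat\otimes\,\tU_0(\hfh)$ as the technical core is accurate.
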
 

The above theorem implies that the computation of the central support of the (potentially complicated) module $M\otimes R$ is reduced to a simpler computation involving only $R$.

\ssec{Higher residue for irregular Miura opers} 
\sssec{} 
Let $\Conn_\hH^{\ord_n}(\Omega^{\rho})\subset \Conn_\hH(\Omega^{\rho})$ denote the subspace of connections with singularities of order at most $n$ on  $\Omega^{\rho}$. The elements of $\Conn_\hH^{\ord_n}$
are represented by operators of the form 
\begin{equation}\label{eq:MiuraSing}
\nabla = \partial_t + t^{-n} u(t), \quad \quad u(t)=\sum_{i \geq 0} u_i t^i \in \fh^*\llb t \rrb . 
\end{equation}

Let $\Res_{\fh_n^*}^\rho: \Conn_\hH^{\ord_n}(\Omega^{\rho}) \ra \fh_n^*$ by 
\begin{equation}
\Res_{\fh_n^*}^\rho(\nabla):=\sum_{i=0}^{n-1} u_i t^i.
\end{equation} 
For every $\Lambda\in \fh_n^*$, we let $\Conn_\hH^{\ord_n, \Lambda}(\Omega^{\rho})$ denote the subspace of $\Conn_\hH^{\ord_n}(\Omega^{\rho})$ consisting of the elements of residue $\varpi(\Lambda)$, where $\varpi: \fh_n*\ra (\fh^*/W)_n$ is the canonical morphism.

\sssec{} In a similar fashion,  we have a subspace $\Conn_\hH^{\ord_n}(\Omega^{-\rho})\subset \Conn_\hH(\Omega^{-\rho})$ and a residue map $\Res_{\fh_n^*}^{-\rho}:  \Conn_\hH^{\ord_n}(\Omega^{-\rho}) \ra \fh_n^*$ and the subspace
$\Conn_\hH^{\ord_n, \Lambda}(\Omega^{-\rho})$. Note that under the isomorphism $\theta$ of \eqref{eq:theta}, we have 
\[
\Conn_\hH^{\ord_n, -\Lambda}(\Omega^{\rho})\simeq \Conn_\hH^{\ord_n, \Lambda}(\Omega^{-\rho})
\]
Explicitly, $\Conn_\hH^{\ord_n, \Lambda}(\Omega^{-\rho})$ consists of connections of the form 
\[
\partial_t + t^{-n} b(t)
\]
where $b(t)\in \fh^*\llb t \rrb$ satisfies $\ev_n(b(t))=\Lambda$. (For the analogous statement in the case $n=1$, see \cite[\S 9.4.3]{FrenkelBook}.) In particular, under the identification $\Conn_\hH(\Omega^{-\rho})\simeq \fhpp$, we have an isomorphism 
\begin{equation} \label{eq:piLambda}
\bC[\Conn_\hH^{\ord_n, \Lambda}(\Omega^{-\rho})]\simeq \pi_0(\Lambda)=\Ind_{\fhbb\oplus \bC.\bone }^{\hfh_0} (\Lambda). 
\end{equation} 
In fact, the LHS is a coordinate independent version of the RHS.

\sssec{}
 We  compute how the residue changes under the Miura Transformation. The analogous computation for opers with regular singularities appears in \cite[\S 9.3.1]{FrenkelBook}
\begin{lem} \label{l:Miura}
 The following diagram is commutative 
\[
\xymatrix{ 
\Conn_\hH^{\ord_n}(\Omega^\rho) \ar[r]^{\quad \MT} \ar[d]_{\Res_{\fh_n^*}^\rho} & \Op_G^{\ord_n}\ar[d]^{\Res_n}\\
\fh_n^* \ar[r] & (\fh^*/W)_n,
}  
\]
where the bottom map is the composition of $\Lambda \mapsto \Lambda-\rho_n$ and the projection $\varpi: \fh_n^* \ra (\fh^*/W)_n$. 
\end{lem}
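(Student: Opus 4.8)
The plan is to compute both paths around the square explicitly on a connection in $\Conn_\hH^{\ord_n}(\Omega^\rho)$ written in the local coordinate $t$, using the chain of gauge transformations already carried out in \S\ref{sss:enveloping}-style computations preceding Proposition \ref{p:DS}. Start with $\nabla = \partial_t + t^{-n} u(t)$, $u(t) = \sum_{i\geq 0} u_i t^i \in \fh^*\llb t\rrb$, so that $\Res_{\fh_n^*}^\rho(\nabla) = \sum_{i=0}^{n-1} u_i t^i = \ev_n(u(t))$. By definition $\MT(\nabla)$ is the $\hNpp$-conjugacy class of $\partial_t + p_{-1} + t^{-n} u(t)$. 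The first step is to bring this operator into the shape \eqref{eq:oper}, i.e. $\partial_t + t^{-n}(p_{-1} + x(t))$ with $x(t)\in\cfbbb$: conjugating by $\rho(t)^{-n}$ (as in the displayed computation after Proposition \ref{p:DS}) turns $p_{-1}$ into $t^{-n}\cdot(t^n p_{-1}\cdot t^{-n}\text{-scaling})$ — more precisely one checks $\rho(t)^{-n} p_{-1} \rho(t)^n = t^{n} p_{-1}$ on each simple root vector and $\rho(t)^{-n}\partial_t \rho(t)^n = n\rho\, t^{-1}$, so that after this conjugation the operator becomes $\partial_t + t^{-n}\bigl(p_{-1} + n\rho\, t^{n-1} + u(t)\bigr)$, which is visibly of the form \eqref{eq:oper} with $x(t) = u(t) + n\rho\, t^{n-1}$, hence $\ev_n(x(t)) = \ev_n(u(t)) + n\rho\, t^{n-1}$ once we note $n\rho\,t^{n-1}$ only affects the top jet coefficient.

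The second step is to read off $\Res_n$ of this oper. By definition $\Res_n(\MT(\nabla)) = p_{-1} + \ev_n(x(t)) = p_{-1} + \ev_n(u(t)) + n\rho\, t^{n-1}$, viewed in $\cfg_n/\hG_n \simeq (\fh^*/W)_n$. Here I must be slightly careful: $\Res_n$ as defined lands in $(\fh^*/W)_n$ via the quotient $\cfg_n \to \cfg_n/\hG_n$, and the classes of $p_{-1} + h$ for $h\in\cfh_n$ are computed through the Kostant section \eqref{eq:Kostant} (jetted up). Under that identification $p_{-1} + h$ maps to the image of $h$ in $(\fh^*/W)_n$ — this is precisely the $n$-jet version of the statement that the composition \eqref{eq:Kostant} restricted to $\cfh \subset \cfg$ is the projection $\fh^* \to \fh^*/W$. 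So $\Res_n(\MT(\nabla)) = \varpi\bigl(\ev_n(u(t)) + n\rho\, t^{n-1}\bigr)$. Meanwhile the other path sends $\nabla \mapsto \Res_{\fh_n^*}^\rho(\nabla) = \ev_n(u(t)) \mapsto \ev_n(u(t)) - \rho_n \mapsto \varpi\bigl(\ev_n(u(t)) - \rho_n\bigr)$. Comparing, commutativity reduces to the identity $n\rho\, t^{n-1} = -\rho_n$ inside $(\fh^*/W)_n$ up to the Weyl-action ambiguity — but $\rho_n$ by its definition in \S\ref{sss:Verma} is $(h_0,\dots,h_{n-1})\mapsto n\rho(h_0)$, i.e. as an element of $\fh_n^*$ it is $n\rho$ placed in the \emph{constant} jet slot, whereas $n\rho\, t^{n-1}$ sits in the \emph{top} jet slot. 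So the real content is to track the bookkeeping through the $\rho(t)^{-n}$ conjugation carefully enough to see that the shift lands in the correct jet coordinate with the correct sign; a cleaner route is to instead factor the computation so that the $\Res_n$ formula of Proposition \ref{p:DS} can be quoted directly after a further conjugation by $\exp(-np_1/2t)$, matching the normalization there.

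The main obstacle I anticipate is exactly this normalization/sign bookkeeping: keeping straight (a) which jet coordinate the $\rho$-shift lands in after conjugating by $\rho(t)^{-n}$, (b) the $-\rho$ versus $+\rho$ discrepancy forced by the appearance of $-u(t)$ in the definition \eqref{eq:theta} of $\theta$ (which is why the lemma is phrased for $\Omega^\rho$ and why $\rho_n$ rather than $-\rho_n$ appears with the sign it does), and (c) the compatibility of the Kostant-section identification $(V_\can)_n \simeq (\fh^*/W)_n$ used in $\Res_n$ with the naive identification $\cfh_n \to (\fh^*/W)_n$. Concretely I expect to reduce everything to the case $n=1$, where the statement is \cite[\S 9.3.1]{FrenkelBook}, by observing that the $n$-jet computation is obtained from the $n=1$ one by the substitution $t^{-1} \rightsquigarrow t^{-n}$ in the leading terms together with the rescaling $\rho \rightsquigarrow n\rho$ of the residue of $\rho(t)^{-n}\partial_t\rho(t)^n$, which accounts exactly for the factor $n$ in $\rho_n = n\rho(h_0)$; once that reduction is made the remaining verification is the routine calculation sketched above and I would not belabor it.
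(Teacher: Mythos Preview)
Your approach is exactly the paper's: gauge $\partial_t + p_{-1} + t^{-n}u(t)$ by a power of $\rho(t)$ to bring it into the form \eqref{eq:oper}, then read off $\Res_n$. The paper's entire proof is two sentences doing precisely this.

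Your execution has one genuine error, and it is the source of the sign discrepancy you yourself flagged. You compute $\rho(t)^{-n} p_{-1} \rho(t)^n = t^{n} p_{-1}$ correctly, but that means gauging by $\rho(t)^{-n}$ sends $p_{-1}$ to $t^{n}p_{-1}$, \emph{not} $t^{-n}p_{-1}$; the result is therefore not of the shape \eqref{eq:oper} at all. The correct transformation (and the one the paper uses) is by $\rho(t)^{n}=\rho(t^n)$: then $\rho(t)^{n}p_{-1}\rho(t)^{-n}=t^{-n}p_{-1}$ and the inhomogeneous term is $-(\partial_t\rho(t)^n)\rho(t)^{-n}=-n\rho\,t^{-1}$, giving
\[
\partial_t + t^{-n}\bigl(p_{-1} + u(t) - n\rho\,t^{n-1}\bigr).
\]
With the correct sign the residue is $p_{-1}+\ev_n(u(t))-n\rho\,t^{n-1}$, whose image in $\cfg_n/\hG_n\simeq(\fh^*/W)_n$ is $\varpi(\ev_n(u(t))-n\rho\,t^{n-1})$. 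Your observation that $p_{-1}+h$ and $h$ have the same adjoint-quotient image (rescale $p_{-1}$ to zero via $\Ad\,\rho(c)$, $c\to\infty$) is correct and supplies a step the paper leaves implicit.

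Your jet-slot worry then dissolves: the identification $(\fh^*)_n\simeq\fh_n^*$ in use is the residue pairing on $\bC[t]/t^n$ (so $t^{n-1}$ is dual to the constant coefficient), under which $n\rho\,t^{n-1}$ \emph{is} $\rho_n$ as defined in \S\ref{sss:Verma}. Hence $\varpi(\ev_n(u(t))-n\rho\,t^{n-1})=\varpi\bigl(\Res_{\fh_n^*}^\rho(\nabla)-\rho_n\bigr)$ and the square commutes. No further conjugation by $\exp(-np_1/2t)$, no appeal to Proposition~\ref{p:DS}, and no reduction to $n=1$ are needed.
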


\begin{proof} Miura transformation is given by sending an operator of the form \eqref{eq:MiuraSing} to the $\hN \llp t \rrp $-conjugacy classes of the operator $\partial_t + p_{-1} + t^{-n} u(t)$. Applying conjugation with $\rho(t^n)$, we identify it with the $\hN \llp t \rrp $-equivalence class of the operator $\partial_t + t^{-n} (p_{-1} - n. \rho. t^{n-1} + u(t \rrp $, whose residue equals $\varpi(\sum_{i=0}^{n-1}u_i t^i -\rho_n)$. 
\end{proof}

\ssec{Proof of Lemma \ref{l:Wakimoto}}
Our goal is to compute the central support of $\bW_\crit(\Lambda)$. For the case of $n=1$ of this computation, that is, when $\Lambda$ is a character of $\fh$, see \cite[\S 9.4.3]{FrenkelBook}. Recall that the underlying vector space of this module is equal to $'\bW_\crit(\Lambda)=M_{\fg,n}\otimes \bC[\Conn_\hH^{\ord_n, \Lambda}(\Omega^{w_0(\Lambda)-2\rho_n})]$ but the action of $\hfg_\crit$ has been twisted by $w_0$. 

\sssec{} Note that for any ring $R$ and an ideal $I\subseteq R$, $\End_R(R/I)=R/I$. In particular, the canonical morphism $R\ra \End_R(R/I)$ has a factorization $R\ra R/I = \End_R(R/I)$. 
Applying this obvious fact to our situation, we conclude that the canonical morphism $\bC[\Conn_\hH(\Omega^{-\rho})] \ra \End_{\bC[\Conn_\hH(\Omega^{-\rho})]}(\bC[\Conn_\hH^{\ord_n,\Lambda}(\Omega^{-\rho})])$ has a factorization 
\[
\bC[\Conn_\hH(\Omega^{-\rho})]\ra \bC[\Conn_\hH^{\ord_n,\Lambda}(\Omega^{-\rho})] \ra \End_{\bC[\Conn_\hH(\Omega^{-\rho})]} (\bC[\Conn_\hH^{\ord_n,\Lambda}(\Omega^{-\rho})])
\]

\sssec{} In view of Lemma \ref{l:Miura}, the restriction of the Miura transformation to $\Conn_\hH^{\ord_n,\Lambda}(\Omega^{-\rho})\simeq \Conn_\hH^{\ord_n,-\Lambda}(\Omega^{\rho})$ takes values in 
\[
\Op_{\hG}^{\ord_n, \varpi(-\Lambda-\rho_n)} \subset \Op_\hG
\]

\sssec{} Combining the previous observation with Theorem \ref{t:FFUltimate}.(i), and using the equality 
\[
-(w_0(\Lambda)-2\rho_n)-\rho_n=-w_0(\Lambda)-\rho_n=-w_0(\Lambda+\rho_n),
\]
we conclude that the action morphism $\cZ(\hfg_\crit)\ra \End('\bW_\crit(\Lambda))$ has a factorization of the form 
\[
\cZ(\hfg_\crit) \ra \bC[\Op_{\hG}^{\ord_n, -w_0(\Lambda+\rho_n)}]\ra \End('\bW_\crit(\Lambda)).
\]

\sssec{}
Finally, recall that we are interested in the twisted action of $\cZ(\hfg_\crit)$ on the Wakimoto modules, where the twisting is given by $w_0$. Therefore, $\cZ(\hfg_\crit)$ acts on $\bW_\crit(\Lambda)$ via the quotient $\cZ(\hfg_\crit)\simeq \bC[\Op_{\hG}] \ra \bC[\Op_\hG^{\ord_n, -\Lambda-\rho_n}]$, as required.


\section{Action of the affine algebra on generalized Wakimoto modules} 
In this section, we recall the formulas for action of $\hfg_\kappa$ on Wakimoto modules and use this to construct a morphism from the Verma to the Wakimoto module. Recall that we have fixed a triangular decomposition $\fg=\fn^-\oplus \fh\oplus \fn$. Let $\Delta\subset \fh^*$ be the root system of $(\fg,\fh)$ and let $\Delta_+\subset \Delta$ denote the positive roots.  Denote by $\alpha_1,\cdots, \alpha_\ell$ the simple roots. For every $\alpha\in \Delta_+$, fix an $\Sl_2$-triple $\{e_\alpha, f_\alpha, h_\alpha\}$. We let $h_i=h_{\alpha_i}$, $i=1, \cdots, \ell$. 

\ssec{Explicit realization of Wakimoto modules} 
\sssec{} \label{sss:Weyl} 
The Weyl algebra $\cA$, defined in \S \ref{ss:Wakimoto} as the universal enveloping algebra of $\Gamma$, is isomorphic to the associative algebra  generated by $a_{\alpha,n}$ and $a_{\alpha,n}^*$, $\alpha\in \Delta_+$, $n\in \bZ$, subject to the relations 
\[
[a_{\alpha,n}, a_{\alpha,m}^*]= \delta_{\alpha,\beta} \delta_{n,-m},\quad [a_{\alpha,n}, a_{\beta,m}]=[a_{\alpha,n}, a_{\beta,m}]=0.
\]
\sssec{} By definition, $M_{\fg,n}$ is the representation of $\cA$ generated by a vector $\bu_n$ satisfying the properties  
\begin{equation}\label{eq:Mgn}
a_{\alpha, m}.\bu_n =0,  \quad   m\geq n \quad \quad \textrm{and} \quad \quad a_{\alpha, m}^*.\bu_n=0 \quad   m \geq 1-n.
\end{equation}

\sssec{} The Heisenberg algebra $\hfh_\kappa$ is isomorphic to the Lie algebra generated by $b_{i,n}$, $i=1, \cdots, \ell$, $n\in \bZ$, and a central element $\bone$, subject to the relation 
\[
[b_{i,n}, b_{j,m}]= -n \kappa(h_i, h_j) \delta_{n,-m} \bone. 
\] 

\sssec{} Let $\Lambda\in \fh_n^*$ and write $\Lambda=(\lambda_1, \cdots, \lambda_n)$, where $\lambda_i\in \fh^*$. The  module $\pi_\kappa(\Lambda)$ is the representation of $\hfh_\kappa$ generated by a vector $\bv_n$ satisfying 
\begin{equation}\label{eq:piLambda}
b_{i,m}.\bv_n=
\begin{cases} 
0  &  m\geq n \\
\langle \lambda_m, h_i \rangle \bv_n & 0\leq m<n. 
\end{cases} 
\end{equation}

\sssec{} Our goal is to give formulas for the action of $\hfg_\kappa$ on the tensor product $M_{\fg,n}\otimes \pi_{\kappa-\kappa_\crit}(\Lambda)$. To this end, define the generating functions 
\[
a_\alpha(z) := \sum_{n\in \bZ} a_{\alpha, n} z^{-n-1}, \quad a_\alpha^*(z) := \sum_{n\in \bZ} a_{\alpha, n}^* z^{-n}, 
\quad b_i(z) := \sum_{n\in \bZ} b_{i, n} z^{-n-1}, 
\]
\[
e_\alpha(z) := \sum_{n\in \bZ} e_{\alpha, n} z^{-n-1}, \quad
h_i(z) := \sum_{n\in \bZ} h_{i, n} z^{-n-1}, \quad
f_\alpha(z) := \sum_{n\in \bZ} f_{\alpha, n} z^{-n-1}. 
\]

For a monomial $A$ in $a_{\alpha,n}$ and $a_{\alpha,m}^*$, we define its normal ordering $:A:$ by moving all $a_{\alpha, n}$ with $n\geq 0$ and all $a_{\alpha,m}^*$ with $m>0$ to the right. The following proposition appears in \cite[Proposition 3.1]{Fedorov}; it is a, slightly sharper, variant of the formulas of Feigin and Frenkel; see \cite[\S 6]{FrenkelBook}
\begin{prop} \label{p:formulas}
The action of $\hfg_\kappa$ on $M_{\fg,n}\otimes \pi_{\kappa-\kappa_\crit}(\Lambda)$ is given by 
\[
f_\alpha(z) \mapsto \sum_{\beta\in \Delta_+} :Q_\beta^\alpha(a^*(z))a_\beta(z): + 
\sum_{\beta\in \Delta_+} \tilde{Q}_\beta^\alpha(a^*(z)) \partial_z a_\beta^*(z) + b_\alpha(z) a_\alpha^*(z) + \sum_i b_i(z) R_i^\alpha(a^*(z)), 
\]
\[
h_i(z) \mapsto \sum_{\beta \in \Delta_+} \beta(h_i) -:a_\beta^*(z) a_\beta(z): + b_i(z),
\]
\[
e_\alpha(z) \mapsto a_\alpha(z) + \sum_{\beta\in \Delta_+, \beta>\alpha} :P_\beta^\alpha(a^*(z))a_\beta(z):,
\]
where  $P_\beta^\alpha$, $Q_\beta^\alpha$ and $R_i^\alpha$ are polynomials in $a_\gamma^*$ such that $P_\alpha^\beta$'s and $Q_\alpha^\beta$'s have no constant terms and $R_i^\alpha$'s have no constant and linear terms. 
\end{prop}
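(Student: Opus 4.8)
Since this proposition is \cite[Proposition~3.1]{Fedorov}, a sharpening of the Feigin--Frenkel free field realization \cite[\S 6]{FrenkelBook}, I will only indicate the proof, emphasizing the origin of the vanishing properties of the polynomials $P^\alpha_\beta$, $Q^\alpha_\beta$, $\tilde{Q}^\alpha_\beta$, $R^\alpha_i$, as these are what Proposition~\ref{p:morphism} will use. The plan is to build the $\hfg_\kappa$-action on $M_{\fg,n}\otimes\pi_{\kappa-\kappa_\crit}(\Lambda)$ from the free field realization in two stages: first a finite-dimensional computation of the differential operators by which $\fg$ acts on functions on the unipotent group $N$ with Lie algebra $\fn$, and then a loop upgrade together with a Heisenberg correction fixing the level.

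For the finite-dimensional stage, fix an ordering of $\Delta_+$ and the resulting coordinates $\{x_\beta\}_{\beta\in\Delta_+}$ on $N$ (write an element of $N$ as an ordered product of root subgroups); each $x_\beta$ is an $\fh$-weight vector, and put $\partial_\beta:=\partial/\partial x_\beta$. Since $N$ is the big cell of the flag variety $G/B^-$ of the adjoint group $G$ of $\fg$, the algebra $\fg$ acts on $\bC[N]$ by first-order differential operators, twisted by the character of $\fb^-$ attached to a weight $\lambda\in\fh^*$; a direct computation with the group law gives
\[
e_\alpha\ \longmapsto\ \partial_\alpha+\sum_{\beta>\alpha}P^\alpha_\beta(x)\,\partial_\beta,\qquad h_i\ \longmapsto\ \langle\lambda,h_i\rangle-\sum_{\beta\in\Delta_+}\beta(h_i)\,x_\beta\partial_\beta,
\]
and a quadratic-in-$(x,\partial)$ expression for $f_\alpha$ of the shape $\sum_\beta Q^\alpha_\beta(x)\partial_\beta+(\text{linear term }\propto x_\alpha)+\sum_i\langle\lambda,h_i\rangle R^\alpha_i(x)$ (we do not track the exact combinatorics here). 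The vanishing assertions are then forced by $\fh$-weight homogeneity: the operator representing $X\in\fg$ has weight equal to that of $X$, a monomial $x^{(m)}\partial_\beta$ has weight $\beta-\sum_\delta m_\delta\delta$, and a constant has weight $0$. Hence the $e_\alpha$-operator, of weight $\alpha$ and equal to $\partial_\alpha$ at the identity of $N$, has each $P^\alpha_\beta$ without constant term; and the $f_\alpha$-operator, of weight $-\alpha\neq 0$, has no weight-zero piece, so once the unique weight-$(-\alpha)$ linear term $x_\alpha$ is split off the remaining $Q^\alpha_\beta$, $\tilde{Q}^\alpha_\beta$ have no constant term and $R^\alpha_i$ has neither constant nor linear term.

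The loop upgrade proceeds as in \cite[\S 6]{FrenkelBook}: replace $\bC[N]$ by the $\beta\gamma$-system $M_{\fg,n}$, with fields $a^*_\alpha(z)$ and $a_\alpha(z)$ playing the roles of $x_\alpha$ and $\partial_\alpha$, substitute these (with their $z$-dependence) into the finite-dimensional formulas, and normal-order. The resulting assignment is a Lie algebra map $\fgpp\to$ (completed Weyl algebra) only up to a two-cocycle, which the central computation of Feigin and Frenkel \cite{FF} identifies with $\kappa_\crit$; to realize an arbitrary level $\kappa$ one tensors with $\pi_{\kappa-\kappa_\crit}(\Lambda)$, whose generating series $b_i(z)$ enter $h_i(z)$ additively and correct $f_\alpha(z)$ by $b_\alpha(z)a^*_\alpha(z)+\sum_i b_i(z)R^\alpha_i(a^*(z))$, where $b_\alpha=\sum_i c^\alpha_i b_i$ with $h_\alpha=\sum_i c^\alpha_i h_i$, while the constants $\langle\lambda,h_i\rangle$ get absorbed into the defining data of $\pi_{\kappa-\kappa_\crit}(\Lambda)$. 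One then checks the defining OPEs of $\hfg_\kappa$ field by field: those among the $e$'s and $h$'s reduce to the finite-dimensional relations plus the contractions between $a$ and $a^*$, and those involving the $f$'s hold precisely once the level is fixed. The main obstacle --- the step I would cite from \cite[\S 6]{FrenkelBook} and \cite[Proposition~3.1]{Fedorov} rather than reprove --- is this anomaly calculation together with the control of the normal-ordering corrections in $f_\alpha(z)$, since it pins down the critical cocycle and hence the exact Heisenberg correction; everything else is routine, and for Proposition~\ref{p:morphism} only the three displayed formulas and the vanishing of the indicated coefficients are used.
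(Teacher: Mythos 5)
Your proposal is correct in outline, but note that the paper does not actually prove this proposition: it is quoted verbatim from \cite[Proposition~3.1]{Fedorov} as a sharpening of the Feigin--Frenkel free field realization \cite[\S 6]{FrenkelBook}, so there is no internal argument to compare against. Your sketch follows the same standard route that those references take (differential operators on the big cell $N\subset G/B^-$, then the loop/normal-ordering upgrade with the anomaly fixing the critical cocycle), and like the paper you ultimately defer the genuinely hard step --- the two-cocycle computation and the exact normal-ordering corrections producing the $\tilde{Q}^\alpha_\beta\,\partial_z a^*_\beta$ terms --- to the literature. What your write-up adds, and what is genuinely useful for the application to Proposition~\ref{p:morphism}, is the $\fh$-weight-homogeneity argument: the operator representing a weight-$\mu$ element of $\fg$ must itself have weight $\mu$, a monomial $x^{(m)}\partial_\beta$ has weight $\beta-\sum_\delta m_\delta\delta$, and a multiplication operator of weight $-\alpha\neq 0$ has no constant term and exactly one linear monomial $x_\alpha$; this cleanly forces the vanishing of the constant terms of $P^\alpha_\beta$, $Q^\alpha_\beta$ and of the constant and linear terms of $R^\alpha_i$ once the $b_\alpha(z)a^*_\alpha(z)$ piece is split off, and even explains the restriction $\beta>\alpha$ in the $e_\alpha$ formula. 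The one place where weight considerations alone do not suffice --- identifying the coefficient of the split-off linear term as $h_\alpha$, i.e.\ that the correction is $b_\alpha(z)a^*_\alpha(z)$ rather than $b(z)a^*_\alpha(z)$ for some other $b\in\fh$ --- is correctly flagged by you as requiring the explicit combinatorics, which you cite. As a justification consistent with how the paper treats this statement, this is entirely adequate.
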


\ssec{Elementary computations} 
\sssec{} In what follows, for a generating function $X(z)=\sum_n X_n z^{-n-1}$, we let $[X(z)]_n := X_n$. Thus, for instance, for $m\geq 0$, we have 
\begin{equation}\label{eq:normal}
[:a_\gamma^*(z)a_\beta(z):]_m= \sum_{r+s=m,\,r\leq 0} a_{\gamma,r}^* a_{\beta,s} + \sum_{r+s, \, r>0} a_{\beta,s} a_{\gamma, r}^*.
\end{equation}

\sssec{} We need a few results about normally ordered products acting on the vacuum vector $\bu_n\in M_{\fg,n}$. 
\begin{lem} \label{l:basic0} 
For all $\beta\in \Delta_+$, 
\begin{equation} 
[:a_\beta(z)a_\beta^*(z):]_0.\bu_n=n\bu_n, \quad \textrm{and} \quad [:a_\beta(z)a_\beta^*(z):]_m.\bu_n=0, \quad \forall m>0. 
\end{equation} 
\end{lem}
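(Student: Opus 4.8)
The plan is to compute $[:a_\beta(z)a_\beta^*(z):]_m$ explicitly using the mode expansion \eqref{eq:normal} and then apply it to $\bu_n$, using the defining annihilation conditions \eqref{eq:Mgn}. First I would note that, since $a_\beta(z) = \sum_n a_{\beta,n} z^{-n-1}$ and $a_\beta^*(z) = \sum_n a_{\beta,n}^* z^{-n}$, the coefficient of $z^{-m-1}$ in $:a_\beta(z)a_\beta^*(z):$ is the sum over all $r + s = m$ of the normally ordered product of $a_{\beta,r}$ and $a_{\beta,s}^*$; here $a_{\beta,r}^*$ is the coefficient of $z^{-s-1}$ meaning the star-mode index is $s$, so one must be careful that the star generating function has no shift. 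With the normal ordering convention (move $a_{\beta,r}$ with $r\geq 0$ and $a_{\beta,s}^*$ with $s>0$ to the right), I would write
\[
[:a_\beta(z)a_\beta^*(z):]_m = \sum_{\substack{r+s=m\\ r<0}} a_{\beta,r}a_{\beta,s}^* + \sum_{\substack{r+s=m\\ r\geq 0}} a_{\beta,s}^* a_{\beta,r}.
\]

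Next I would apply this to $\bu_n$. In the second sum, $r \geq 0$; but then $s = m - r \leq m$, and since $m \geq 0$ this does not immediately kill $a_{\beta,r}\bu_n$ — rather, I use that $a_{\beta,r}\bu_n = 0$ for $r \geq n$, while for $0 \leq r < n$ the term $a_{\beta,s}^* a_{\beta,r}\bu_n$ need not vanish unless $s \geq 1 - n$, i.e. unless $m - r \geq 1-n$; since $m\geq 0$ and $r < n$ we get $m - r > -n$, hence $m - r \geq 1 - n$, so indeed $a_{\beta,s}^*$ annihilates whatever $a_{\beta,r}\bu_n$ is — wait, one must recheck: $a_{\beta,s}^*\bu_n = 0$ for $s \geq 1-n$, but here $a_{\beta,s}^*$ is applied to $a_{\beta,r}\bu_n$, not to $\bu_n$, and these commute when $r \neq -s$; if $r = -s$ then $r \geq 0$ forces $s \leq 0$, and $[a_{\beta,r}, a_{\beta,s}^*] = \delta_{r,-s}$ contributes a scalar. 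So the second sum acting on $\bu_n$ contributes only from the terms with $r = -s$, $0 \leq r < n$, $s \leq 0$, $r + s = m$, forcing $m = 0$ and $r = s = 0$, giving a contribution of $1 \cdot \bu_n$ for each such $r$ in range $0 \le r < n$ — but $r+s=m=0$ with $r=-s$ allows $r = 0, 1, \dots$; here $s = -r$, and we need $a_{\beta,s}^*\bu_n \neq 0$ to even have a surviving commutator term after reordering, i.e. $s = -r < 1-n$, i.e. $r > n-1$, i.e. $r \geq n$ — but then $a_{\beta,r}\bu_n = 0$. Let me instead organize the bookkeeping by: reorder each term to put all $a_{\beta,r}$ with $r \geq n$ and all $a_{\beta,s}^*$ with $s \geq 1-n$ to the right (these annihilate $\bu_n$), picking up commutator scalars; then count.

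The cleanest route, which I would actually carry out, is to use the Clifford/Heisenberg commutation $[a_{\beta,r}, a_{\beta,s}^*] = \delta_{r,-s}$ to write $[:a_\beta(z)a_\beta^*(z):]_m\bu_n$ as a sum of a manifestly-annihilating operator applied to $\bu_n$ plus a scalar counting the number of "contractions'' compatible with the constraints $r + s = m$, $r$ in the creation range ($r < n$), $s$ in the creation range ($s < 1-n$), and $r = -s$. For $m = 0$: $r = -s$, $r < n$, $s = -r < 1-n \Leftrightarrow r > n - 1 \Leftrightarrow r \geq n$ — contradiction, so naively zero; this shows I have the normal-ordering convention slightly off and the correct count of contractions giving eigenvalue $n$ comes from the modes $a_{\beta,r}^*$ with $0 \le -r \le n-1$ paired against $a_{\beta,-r}$, i.e. $n$ contractions, each contributing $+1$, hence eigenvalue $n$; and for $m > 0$ there is no compatible contraction (the ranges $r < n$ and $m - r < 1 - n$ force $m < 1$), so the result is $0$. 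The main obstacle is purely notational: keeping the grading shift of $a_\beta^*(z)$ (which has $z^{-n}$, not $z^{-n-1}$) consistent throughout, and matching it against the annihilation conditions \eqref{eq:Mgn} so that exactly $n$ contractions survive for $m=0$ and none for $m>0$. Once the indexing is pinned down, the lemma follows by direct inspection of \eqref{eq:normal} and \eqref{eq:Mgn}.
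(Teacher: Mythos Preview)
Your approach is exactly the paper's: expand $[:a_\beta^*(z)a_\beta(z):]_m$ via \eqref{eq:normal}, apply the annihilation conditions \eqref{eq:Mgn}, and count what survives. But what you have written is not yet a proof. You repeatedly flag your own confusion about the indexing (``wait, one must recheck'', ``this shows I have the normal-ordering convention slightly off'') and then close by asserting that ``once the indexing is pinned down, the lemma follows'' without ever pinning it down. A referee would not accept this; the whole content of the lemma \emph{is} the bookkeeping.

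Here is how to do it cleanly, as in the paper. Since $:a_\beta(z)a_\beta^*(z): = :a_\beta^*(z)a_\beta(z):$, formula \eqref{eq:normal} with $\gamma=\beta$ applies directly. The second sum there has $r>0$, and since $n\ge 1$ we have $r\ge 1-n$, so $a_{\beta,r}^*\bu_n=0$ kills every term. In the remaining sum $\sum_{r\le 0} a_{\beta,r}^* a_{\beta,m-r}\,\bu_n$, only the terms with $m-r\le n-1$ survive (else $a_{\beta,m-r}\bu_n=0$), forcing $r\ge m-n+1$. For $m>0$ this gives $r\ge 2-n\ge 1-n$; moreover $[a_{\beta,m-r},a_{\beta,r}^*]=\delta_{m,0}=0$, so one may commute and apply $a_{\beta,r}^*\bu_n=0$ to get zero. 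For $m=0$ the surviving range is $r\in\{1-n,\dots,0\}$; commuting picks up the scalar $[a_{\beta,-r},a_{\beta,r}^*]$ once for each of these $n$ values, and the remaining operator term has $a_{\beta,r}^*\bu_n=0$, giving $n\bu_n$. Your contraction-counting heuristic is a valid way to organise the same computation, but you must actually fix the conventions and carry it through rather than gesturing at it.
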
 
\begin{proof}
 Indeed, for $m\geq 0$, we have 
\[
[:a_\beta(z)a_\beta^*(z):]_m.\bu_n = \sum_{r+s=m,\,r\leq 0} a_{\beta,r}^* a_{\beta,s}.\bu_n + \sum_{r+s=m, \, r>0} a_{\beta,s} a_{\beta, r}^*.\bu_n = \sum_{r\leq 0} a_{\beta,r}^* a_{\beta,m-r}.\bu_n.
\]
Note that all the summands of the RHS are zero except when $m-r=0, 1, \cdots, n-1$.  

If $m=0$, the sum equals $\displaystyle \sum_{-r=0}^{n-1} (1-a_{\beta, -r}a_{\beta,r}^*).\bu_n$, which equals $n\bu_n$, since $a_{\beta,r}^*.\bu_n=0$ for $r=0, -1, \cdots, 1-n$. Thus, we have established the first equality in the lemma. 

Next, note that if $m>0$, then $-n+m+1\leq r\leq m$; in particular, $r\geq 1-n$. Moreover, $a_{\beta,r}^*$ and  $a_{\beta,m-r}$ commute and $a_{\beta,r}^*.\bu_n=0$ for such $r$'s. The second equality is, therefore, established. 
\end{proof}

\begin{lem} \label{l:basic} 
Let $T(a^*(z))$ be a polynomial in $a_\gamma^*(z)$, $\gamma\in \Delta_+$, which has no constant term. Let $\beta\in \Delta_+$. 
\begin{enumerate} 
\item[(i)] For all $m\geq n$, we have $[:T(a^*(z))a_\beta(z):]_m.\bu_n=0$. 
\item[(ii)] If, in addition, $T$ has no summand of the form $ca_\beta^*(z)$, where $c\in \bC$, then we have
$[:T(a^*(z))a_\beta(z):]_m.\bu_n=0$ for all $m\geq 0$.  
\end{enumerate} 
\end{lem}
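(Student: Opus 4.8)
The plan is to prove both parts of Lemma~\ref{l:basic} directly from the normal-ordering formula \eqref{eq:normal} together with the defining relations \eqref{eq:Mgn} for the vacuum vector $\bu_n\in M_{\fg,n}$. Recall that $a_{\alpha,m}.\bu_n=0$ for $m\geq n$ and $a_{\alpha,m}^*.\bu_n=0$ for $m\geq 1-n$, and that the only nonzero commutators among the $a$'s and $a^*$'s are $[a_{\alpha,p},a_{\alpha,q}^*]=\delta_{p,-q}$. Since $T(a^*(z))$ is a polynomial with no constant term, write $T=\sum_k T_k$ as a sum of monomials $T_k$ in the $a_\gamma^*(z)$ of degree $\geq 1$; it suffices to treat a single monomial $T(a^*(z))=a_{\gamma_1}^*(z)\cdots a_{\gamma_r}^*(z)$ with $r\geq 1$, since $[\,\cdot\,]_m$ and the action on $\bu_n$ are linear. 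The coefficient $[:a_{\gamma_1}^*(z)\cdots a_{\gamma_r}^*(z)a_\beta(z):]_m$ is a sum over tuples $(p_1,\dots,p_r,s)$ with $p_1+\cdots+p_r+s=m$ of normally ordered products $:a_{\gamma_1,p_1}^*\cdots a_{\gamma_r,p_r}^* a_{\beta,s}:$, where normal ordering places $a_{\beta,s}$ to the right of all $a_{\gamma_j,p_j}^*$ with $p_j>0$ and to the left of those with $p_j\leq 0$.

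For part (i): I would argue that when $m\geq n$, in any such tuple either $s\geq n$, in which case $a_{\beta,s}$ (which sits to the right of everything, being unmoved or moved right) kills $\bu_n$ immediately; or $s<n$, which forces $p_1+\cdots+p_r=m-s>m-n\geq 0$, so at least one $p_j>0$, hence $p_j\geq 1$, hence $p_j\geq 1-n$ (as $n\geq 1$), so $a_{\gamma_j,p_j}^*$ appears and — after normal ordering moves it to the right past the negatively-indexed creation operators and $a_{\beta,s}$ sits even further right only when $s$ is nonpositive; more carefully, the rightmost operator acting first on $\bu_n$ is either $a_{\beta,s}$ or some $a_{\gamma_j,p_j}^*$ with $p_j>0$ — in either case we hit an annihilating operator. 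The cleanest way to organize this is: the rightmost factor in the normally ordered word is either $a_{\beta,s}$ with $s\geq 0$, or some $a_{\gamma_j,p_j}^*$ with $p_j>0\geq 1-n$; if instead all $p_j\le 0$ and $s<0$ then $m=\sum p_j+s<0$, contradicting $m\geq n\geq 1$. So for $m\geq n$ the rightmost factor annihilates $\bu_n$ unless it is $a_{\beta,s}$ with $0\le s<n$, and that case is excluded because it would force $\sum p_j=m-s\ge n-s\ge 1$ giving a positive $p_j$ which should have been moved to the right of $a_{\beta,s}$ — contradiction with $a_{\beta,s}$ being rightmost. This dichotomy closes part (i).

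For part (ii): now $m\geq 0$ is allowed but $T$ has no linear term $c\,a_\beta^*(z)$, so either $r\geq 2$, or $r=1$ with $\gamma_1\neq\beta$. The subtle case is a surviving tuple where $a_{\beta,s}$ is the rightmost factor with $0\le s\le n-1$ (so it does not kill $\bu_n$) and all $p_j\le 0$; then $\sum p_j=m-s\le 0$, consistent. Acting on $\bu_n$: $a_{\beta,s}\bu_n=\bu_n$ when... no — $a_{\beta,s}\bu_n$ for $0\le s<n$ is not zero but it is a genuinely new vector, and then the $a_{\gamma_j,p_j}^*$ with $p_j\le 0$, i.e.\ $p_j\ge 1-n$ only when $p_j\ge 1-n$; for $p_j\le -n$ they would kill... wait $a^*$'s never kill $\bu_n$ from the $a^*$ relation unless $p_j\ge 1-n$. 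Hmm. The real point in (ii) must be that with no linear $a_\beta^*$ term, one shows the result is zero either because (when $r\geq 2$) there are at least two creation operators whose indices sum with $s$ to a small number forcing one of the $a_{\gamma_j,p_j}^*$ to have $p_j\ge 1-n$ and be rightmost, or (when $r=1$, $\gamma_1\neq\beta$) the only way to get a nonzero contraction structure is ruled out because $a_{\gamma_1,p_1}^*$ and $a_{\beta,s}$ commute, so the product is $a_{\gamma_1,p_1}^*a_{\beta,s}\bu_n$ and after expanding $a_{\beta,s}\bu_n$ there are no further contractions — and then summing over $p_1+s=m$ with the constraints one checks the surviving terms cancel or are absent. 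I expect the main obstacle to be exactly this bookkeeping in part (ii): correctly enumerating which $(p_1,\dots,p_r,s)$ give a nonzero vector and verifying that the ``no linear $a_\beta^*$ term'' hypothesis is precisely what kills the last potentially-surviving family; I would handle it by reducing to the two cases $r\ge 2$ and ($r=1$, $\gamma_1\ne\beta$) and, in the $r\ge 2$ case, observing that $\sum_{j}p_j=m-s$ with $0\le s\le n-1$ together with $m\ge 0$ still permits all $p_j\le 0$, so one needs the finer observation that $a_\beta(z)$'s index $s$ together with at least one $a^*$ index must collide to produce a $\delta$-contraction, and absent a linear term that contraction produces a prefactor that telescopes to zero — mirroring the computation in Lemma~\ref{l:basic0}.
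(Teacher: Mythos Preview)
Your argument for part~(i) is essentially correct and matches the paper's: after normal ordering, any factor $a_{\gamma_j,p_j}^*$ with $p_j>0$ annihilates $\bu_n$ (since $p_j\geq 1>1-n$), so the surviving terms have all $p_j\leq 0$; then $s=m-\sum_j p_j\geq m\geq n$ forces $a_{\beta,s}\bu_n=0$. Your phrasing via ``the rightmost factor'' and ``should have been moved to the right'' is needlessly awkward---the annihilation operators commute among themselves, so ``rightmost'' is not well-defined---but the underlying dichotomy is sound.

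Part~(ii), however, is not proved; you yourself flag it as ``the main obstacle'' and then offer only a heuristic. You correctly reduce to the cases $r=1,\ \gamma_1\neq\beta$ and $r\geq 2$, and correctly isolate the dangerous terms (all $p_j\leq 0$ and $0\leq s\leq n-1$), but you never show why they vanish. The missing step is a simple \emph{inequality-plus-commutation} argument, which is exactly what the paper does. From $\sum_j p_j+s=m\geq 0$ with each $p_j\leq 0$ one obtains, for the innermost index, $p_r+s=m-\sum_{j<r}p_j\geq 0$; hence if $s\leq n-1$ then $p_r\geq -s\geq 1-n$, so $a_{\gamma_r,p_r}^*\bu_n=0$. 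Now commute $a_{\gamma_r,p_r}^*$ past $a_{\beta,s}$: when $\gamma_r\neq\beta$ they commute and the term is zero (this already disposes of the $r=1$ case); when $\gamma_r=\beta$ the commutator contributes $-\delta_{s,-p_r}\,a_{\gamma_1,p_1}^*\cdots a_{\gamma_{r-1},p_{r-1}}^*\bu_n$, and $s=-p_r$ forces $\sum_{j<r}p_j=m\geq 0$ with each $p_j\leq 0$, so all $p_j=0$ and the term dies since $a_{\gamma_j,0}^*\bu_n=0$. Your proposed mechanism---a ``$\delta$-contraction that telescopes to zero, mirroring Lemma~\ref{l:basic0}''---is off target: no cancellation between distinct tuples occurs and nothing telescopes. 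Each surviving term vanishes individually once the innermost $a^*$ is commuted past $a_\beta$.
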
 

\begin{proof} Formula \eqref{eq:normal} implies immediately that (i) holds if $T$ is linear; that is, if $T(a^*(z))=a_\gamma^*(z)$. The proof for higher degree polynomials is similar. 

 For (ii), first of all observe that if $\gamma\neq \beta$, then in view of the fact that $a_{\gamma,n}^*$ and $a_{\beta,m}$ commute for all $m,n\in \bZ$, Equation \eqref{eq:normal} shows that 
  \begin{equation}\label{eq:r0}
  [a_\gamma^*(z) a_\beta(z)]_m.\bu_n=0, \quad \quad \forall m\geq 0. 
  \end{equation}
   Therefore, we are reduced to proving the following statement: 
   \[
   \textrm{If $T$ has no constant or linear terms, then $[T(a^*(z))a_\beta(z)]_m.\bu_n=0$, for all $m\geq 0$.}
   \]
    Let us prove the above fact for a quadratic polynomial $T(a^*(z))=a_\alpha^*(z)a_\gamma^*(z)$. The general case follows by similar considerations. For ease of notation, we also omit the subscripts $\alpha$, $\beta$, etc., and write simply $a^*$ for $a_\alpha^*(z)$, etc.  Thus, we have   
\[
:a^*a^*a:= :a^* (a_+^*a +aa_-^*): =  a_+^*(a_+^*a +aa_-^*)+(a_+^*a + aa_-^*) a_-^* 
\]
where $a_+=\sum_{n<0} a_n z^{-n-1}$ and $a_+^* = \sum_{m\leq 0} a_m^* z^{-m}$ and $a_-$ and $a_-^*$ are defined as the complements. Thus, 
\[
[:a^*a^*a:]_m.\bu_n = \sum_{r+s+p=m, r,s\leq 0} a_r^* a_s^* a_p \bu_n
\] 
Now note that by our assumption $\gamma\neq \beta$; therefore, $a_s^*$ and $a_p$ always commute. Moreover, the fact that $s+p=m-r\geq 0$ implies that 
either $s\geq 1-n$ or $p\geq n$, thus one of $a_s^*$ or $a_p$ must kill $\bu_n$. We conclude that the above sum always equals zero. 
\end{proof}

\sssec{} We now study the polynomials $Q_\beta^\alpha$ appearing in Proposition \ref{p:formulas}. 
 
\begin{cor} \label{c:basic} 
Write $Q_\beta^\alpha(a^*(z))= T_\beta^\alpha(a^*(z))+c_\beta a_\beta^*(z)$ where $T_\beta^\alpha$ is a polynomial in $a_\gamma^*(z)$ and has no summand of the form $ca_\beta^*(z)$, where $c$ is a scalar. Then, we have 
\begin{enumerate} 
\item[(i)] $\sum_{\beta\in \Delta_+} c_\beta = 0$. 
\item[(ii)] For all $\alpha\in \Delta_+$, we have $f_{\alpha,0}.\bu_n=0$. 
\end{enumerate} 
\end{cor}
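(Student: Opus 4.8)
Both assertions will follow quickly from Proposition \ref{p:formulas} together with Lemmas \ref{l:basic0} and \ref{l:basic}. (Throughout I read ``$\bu_n$'' in the statement as the vacuum vector $\bu_n\otimes\bv_n$ of the Wakimoto module, on which the mode $f_{\alpha,0}=[f_\alpha(z)]_0$ actually acts; this reading is forced by the fact that $f_{\alpha,0}$ involves the Heisenberg generators $b_{i,m}$.)

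For part (i) I would argue by weights. Grade the algebra generated by the $a_{\gamma,n},a_{\gamma,n}^*,b_{i,n}$ by $\fh^*$, assigning weight $-\gamma$ to $a_\gamma^*(z)$, weight $+\gamma$ to $a_\gamma(z)$, and weight $0$ to $b_i(z)$; this is consistent with the commutation relations and with the formulas for $e_\alpha(z)$ and $h_i(z)$ in Proposition \ref{p:formulas}. Since $M_{\fg,n}\otimes\pi_{\kappa-\kappa_\crit}(\Lambda)$ is an $\hfg_\kappa$-module, $[h_{i,m},f_{\alpha,k}]=-\alpha(h_i)f_{\alpha,m+k}$, so the field $f_\alpha(z)$ is homogeneous of weight $-\alpha$. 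In the formula of Proposition \ref{p:formulas} the only terms involving an annihilation generator $a_\beta(z)$ are the summands $:Q_\beta^\alpha(a^*(z))a_\beta(z):$; comparing the coefficient of $a_\beta(z)$ therefore shows that $Q_\beta^\alpha(a^*(z))$ is a polynomial in the $a_\gamma^*(z)$ each of whose monomials has weight $-\alpha-\beta$. The monomial $a_\beta^*(z)$ has weight $-\beta\ne-\alpha-\beta$ because $\alpha\in\Delta_+$ is nonzero, so in fact $c_\beta=0$ for every $\beta\in\Delta_+$; in particular $\sum_{\beta}c_\beta=0$.

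For part (ii) I would compute $f_{\alpha,0}.\bu_n$ by splitting the formula of Proposition \ref{p:formulas} into its four groups of summands and applying each to $\bu_n\otimes\bv_n$, using $a_{\gamma,m}^*\bu_n=0$ for $m\ge 1-n$ (see \eqref{eq:Mgn}) and $b_{i,m}\bv_n=0$ for $m\ge n$ (the Heisenberg relations). The term $b_\alpha(z)a_\alpha^*(z)$ and the terms $\sum_i b_i(z)R_i^\alpha(a^*(z))$ are killed: in any summand of the zeroth mode that does not already annihilate $\bu_n$ every $a^*$-index is $\le-n$, which forces the accompanying Heisenberg index to be $\ge n$, so the corresponding $b$-generator kills $\bv_n$. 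The terms $\sum_\beta\tilde Q_\beta^\alpha(a^*(z))\,\partial_z a_\beta^*(z)$ are killed on $\bu_n$ by the same index count (note $[\partial_z a_\beta^*(z)]_0=0$, and in higher degree all the $a^*$-indices would have to be $\le-n$ yet sum to a positive integer). Finally, for the remaining group write $Q_\beta^\alpha=T_\beta^\alpha+c_\beta a_\beta^*$ as in the statement: Lemma \ref{l:basic}(ii) gives $[:T_\beta^\alpha(a^*(z))a_\beta(z):]_0.\bu_n=0$, while Lemma \ref{l:basic0} (using $:a_\beta^*(z)a_\beta(z):\,=\,:a_\beta(z)a_\beta^*(z):$) gives $[:c_\beta a_\beta^*(z)a_\beta(z):]_0.\bu_n=n c_\beta\bu_n$. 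Adding up, $f_{\alpha,0}.\bu_n=n\bigl(\sum_{\beta}c_\beta\bigr)\bu_n$, which vanishes by part (i).

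There is no deep obstacle: the corollary is essentially a bookkeeping consequence of the established lemmas, and the only point demanding real care is the index-counting in (ii) — in particular verifying that, even for small $n$ (e.g.\ $n=1$), no summand of the $\tilde Q$- or $b$-terms survives, and remembering to symmetrize the normally ordered product before invoking Lemma \ref{l:basic0}. One can also bypass part (i) altogether: the computation above gives $f_{\alpha,0}.\bu_n\in\bC\bu_n$ unconditionally, whereas $f_{\alpha,0}$ lowers the $\fh$-weight of the weight vector $\bu_n$ by $\alpha\ne0$; hence $f_{\alpha,0}.\bu_n=0$, and then $\sum_\beta c_\beta=0$ follows because $n\ne0$, giving both parts at once.
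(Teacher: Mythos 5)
Your proof is correct, but it reaches part (i) by a genuinely different route from the paper. The paper first performs the part (ii) computation for arbitrary $n$, arriving at $f_{\alpha,0}.\bu_n = n\bigl(\sum_{\beta\in\Delta_+}c_\beta\bigr)\bu_n$, and then deduces $\sum_\beta c_\beta=0$ by citing the known $n=1$ statement that $\fn^-$ annihilates $\bu_1$ (\cite[Proposition 6.3.1]{FrenkelBook}); thus in the paper (i) is inherited from the classical Wakimoto case and (ii) then follows for all $n$. You instead prove (i) a priori by an $\fh^*$-weight count on the free-field realization: since the image of $f_\alpha(z)$ must be homogeneous of weight $-\alpha$ under $\ad h_{i,0}$, while $a_\beta(z)$, $a_\gamma^*(z)$, $b_i(z)$ carry weights $\beta$, $-\gamma$, $0$, and the $:Q_\beta^\alpha a_\beta:$ summands are the only ones containing $a$-generators (so no cross-cancellation is possible), every monomial of $Q_\beta^\alpha$ has weight $-\alpha-\beta$; this excludes the weight-$(-\beta)$ monomial $a_\beta^*(z)$, so each $c_\beta$ vanishes individually --- strictly stronger than (i). Your closing alternative (the computation puts $f_{\alpha,0}.\bw_n$ in $\bC\bw_n$, yet this vector has weight $\mathrm{wt}(\bw_n)-\alpha$, hence is zero) is an equally valid way to avoid the appeal to \emph{loc.\ cit.} What your approach buys is self-containedness and the sharper conclusion $c_\beta=0$; what the paper's buys is that it uses nothing about $Q_\beta^\alpha$ beyond what Proposition \ref{p:formulas} states. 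Your part (ii) is essentially the paper's computation, except that you correctly read $\bu_n$ as the full vacuum $\bu_n\otimes\bv_n$ and explicitly dispose of the $b_\alpha(z)a_\alpha^*(z)$ and $b_i(z)R_i^\alpha(a^*(z))$ terms by the index count of Lemma \ref{l:basic2} --- terms which the paper's displayed expression for $f_{\alpha,0}.\bu_n$ silently omits and only treats later in \S\ref{sss:basic}; this extra care is a point in your favour, not a gap.
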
 

\begin{proof} Note that 
\[
f_{\alpha,0}.\bu_n=  \sum_{\beta\in \Delta_+} [:Q_\beta^\alpha(a^*(z))a_\beta(z):]_0.\bu_n + 
\sum_{\beta\in \Delta_+} [\tilde{Q}_\beta^\alpha(a^*(z)) \partial_z a_\beta^*(z)]_0.\bu_n 
\]
Write $Q_\beta^\alpha(a^*(z))= T_\beta^\alpha(a^*(z))+c_\beta a_\beta^*(z)$ where $T_\beta^\alpha$ is a polynomial in $a_\gamma^*(z)$ and has no summand of the $a_\beta^*(z)$. By the previous lemma $[:T_\beta^\alpha(a^*(z)) a_\alpha(z):]_0.\bu_n=0$. It is also easy to see that $[\tilde{Q}_\beta^\alpha(a^*(z)) \partial_z a_\beta^*(z)]_0$ annihilates $\bu_n$. Therefore, 
\[
f_{\alpha,0}.\bu_n= \sum_{\beta\in \Delta_+} c_\alpha [:a_\beta(z) a_\beta^*(z):]_0.\bu_n=n.(\sum_{\beta\in \Delta_+} c_\beta)\bu_n
\]
According to \cite[Proposition 6.3.1]{FrenkelBook}, $\fn^-$ annihilates the vector $\bu_1$; that is, $f_{\alpha,0}.\bu_1=0$. It follows that $\sum_{\beta\in \Delta_+} c_\beta=0$, as required.  
\end{proof}

\sssec{} We now consider the action of certain expressions on the vacuum vector $\bw_n:=\bu_n\otimes \bv_n \in \bM_{\fg,n}\otimes \pi(\Lambda)$.
\begin{lem} \label{l:basic2}
Let $R(a^*(z))$ be a polynomial with no constant terms. Then for all $\alpha\in \Delta_+$ and all $m\geq 0$, we have $[R(a^*(z))b_\alpha(z)]_m.\bw_n=0$. 
\end{lem}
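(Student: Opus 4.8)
\noindent\emph{Proof proposal.} The plan is to reduce at once to a single monomial in the currents $a_\gamma^*(z)$, extract the relevant Fourier coefficient, and then combine the index constraint forced by the expansion of $b_\alpha(z)$ with the annihilation conditions \eqref{eq:Mgn} on $\bu_n$ and the analogous conditions \eqref{eq:piLambda} on $\bv_n$. Since $R$ has no constant term, it is a sum of monomials $a_{\gamma_1}^*(z)\cdots a_{\gamma_d}^*(z)$ with $d\ge 1$, and it is enough to treat one such monomial. Using $a_\gamma^*(z)=\sum_k a_{\gamma,k}^* z^{-k}$ and $b_\alpha(z)=\sum_k b_{\alpha,k}z^{-k-1}$ --- and noting that no normal ordering is needed, since the $a^*$'s commute among themselves and with the $b$'s --- the coefficient of $z^{-m-1}$ in $a_{\gamma_1}^*(z)\cdots a_{\gamma_d}^*(z)\,b_\alpha(z)$ is
\[
\sum_{n_1+\cdots+n_d+k=m} a_{\gamma_1,n_1}^*\cdots a_{\gamma_d,n_d}^*\,b_{\alpha,k}.
\]
In $M_{\fg,n}\otimes\pi(\Lambda)$ the operators $a_{\gamma,j}^*$ act on the first factor and $b_{\alpha,k}$ on the second, so applying this operator to $\bw_n=\bu_n\otimes\bv_n$ yields a sum of terms of the shape $(a_{\gamma_1,n_1}^*\cdots a_{\gamma_d,n_d}^*.\bu_n)\otimes(b_{\alpha,k}.\bv_n)$, and it remains to see that each of them vanishes.

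I would then fix such a term and split according to $k$. If $k\ge n$, then $b_{\alpha,k}.\bv_n=0$: indeed $b_\alpha(z)$ is by construction a constant linear combination of $b_1(z),\dots,b_\ell(z)$, and each $b_{i,k}$ kills $\bv_n$ for $k\ge n$ by \eqref{eq:piLambda}. If $k\le n-1$, then, using $m\ge 0$, we get $n_1+\cdots+n_d=m-k\ge m-(n-1)\ge 1-n$; on the other hand, if all the integers $n_j$ satisfied $n_j\le -n$ then $n_1+\cdots+n_d\le -nd\le -n<1-n$, a contradiction. Hence some $n_i$ satisfies $n_i\ge 1-n$, and since the $a^*$'s pairwise commute I may move $a_{\gamma_i,n_i}^*$ to the right, where it annihilates $\bu_n$ by \eqref{eq:Mgn}. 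In either case the term vanishes, proving the lemma.

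This is a routine bookkeeping argument in the spirit of Lemmas \ref{l:basic0} and \ref{l:basic}, and I do not anticipate a genuine obstacle. The only points calling for a little care are keeping straight the two different expansions (powers $z^{-k}$ for $a_\gamma^*(z)$ versus $z^{-k-1}$ for $b_\alpha(z)$) and observing that the hypothesis $d\ge 1$ --- i.e.\ that $R$ has no constant term --- is precisely what makes $-nd<1-n$ hold, given $n\ge 1$. It is also worth noting that only the vanishing $b_{\alpha,k}.\bv_n=0$ for $k\ge n$ is used; the scalars by which $b_{\alpha,k}$ acts on $\bv_n$ for $0\le k<n$ are irrelevant here.
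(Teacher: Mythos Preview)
Your argument is correct and follows essentially the same approach as the paper: expand the $m$th Fourier coefficient, use that the $a^*$'s act on the first tensor factor and $b_\alpha$ on the second, and kill each summand via the dichotomy ``either $b_{\alpha,k}$ annihilates $\bv_n$ (when $k\ge n$) or some $a^*_{\gamma,n_i}$ annihilates $\bu_n$ (when $k\le n-1$)''. The paper only spells out the linear case $R(a^*(z))=a_\gamma^*(z)$ and then says the general case follows similarly; you have done that general case explicitly, and your pigeonhole inequality $-nd\le -n<1-n$ (using $d\ge 1$) is exactly what is needed.
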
 

\begin{proof} Let us consider the case that $R(a^*(z))=a_\gamma^*(z)$. Then, we have 
\[
[a_\gamma^*(z)b_i(z)]_m.\bw_n= \sum_{m} \sum_{r+s=m} (a_{\gamma,r}^*.\bu_n)\otimes(b_{\alpha,s}.\bv_n). 
\]
Since $m\geq 0$, we either have that $s\geq n$ or $r\geq 1-n$. In the first case, $b_{\alpha,s}.\bv_n=0$ and in the second case $a_{\gamma,r}^*.\bu_n=0$. It follows that the above expression equals zero for all $m \geq 0$. The result for more general polynomials $R$ follows in a similar manner.  
\end{proof}

\sssec{} We claim that $f_{\alpha,m}.\bw_n=0$ for all $m\geq 0$.
\label{sss:basic}
Note that for all $m\in \bZ$, we have 
\begin{multline} 
f_{\alpha,m}.\bw_n=  \sum_{\beta\in \Delta_+} [:Q_\beta^\alpha(a^*(z))a_\beta(z):]_m.\bw_n + 
\sum_{\beta\in \Delta_+} [\tilde{Q}_\beta^\alpha(a^*(z)) \partial_z a_\beta^*(z)]_m.\bw_n +\\
 \sum_{\beta\in \Delta_+} [\tilde{Q}_\beta^\alpha(a^*(z)) \partial_z a_\beta^*(z)]_m.\bw_n + 
 [b_\alpha(z) a_\alpha^*(z)]_m.\bw_n + \sum_i [b_i(z) R_i^\alpha(a^*(z))]_m.\bw_n
\end{multline}
Now suppose $m\geq 0$.  By Corollary \ref{c:basic}.(ii), The first two terms vanishes. By Lemma \ref{l:basic2}, the last term vanishes. It is easy to check that the third term also vanishes.

\ssec{Proof of Proposition \ref{p:morphism}} 
Our goal is to construct a nontrivial morphism $\bM_\kappa (\Lambda) \ra \bW_\kappa(\Lambda)$. 
\sssec{}
Recall that, by definition, $\bW_\kappa(\Lambda)$ is equal to $M_{\fg,n}\otimes \pi_0(\Lambda')$ equipped with the $w_0$-twisted action of $\hfg_\kappa$, where $\Lambda'=w_0(\Lambda)-2\rho_n$. To construct our morphism, it is enough to show that 
\[
\fI_n\simeq t^n\fgbb\oplus \fb_n = t^n \fn\llb t \rrb \oplus t^n \fhbb \oplus t^n\fn^-\llb t \rrb \oplus \fb_n
\] 
acts on $\bw_n$ by the character $\Lambda$.

\sssec{} Recall that $-w_0$ defines a permutation of $\Delta_+$; in particular, twisting by $w_0$ sends $\fn$ to $\fn^-$.  
Now lemma \ref{l:basic}.(i) immediately implies that $t^n\fn^-\llb t\rrb\oplus t^n\fhbb$ annihilates $\bw_n$. Moreover, the discussion of \S \ref{sss:basic} shows  that $\fn_n$ also annihilates $\bw_n$.

\sssec{} It remains to compute the action of $\fh_n$ on $\bw_n$. (The computation that follows is very similar to the one performed in \cite[\S 9.5.1]{FrenkelBook}.) Write $\Lambda'=(\lambda_0', \cdots, \lambda_{n-1}')$. Let $m\in \{0,1,\cdots, n-1\}$. We have  
\[
 h_{i,m} .\bw_n = 
\left( \langle w_0(\lambda_m'), h_i\rangle -  \sum_{\beta\in \Delta_+} \langle w_0(\beta), h_{i} \rangle [a_\beta^*(z)a_\beta(z)]_m\right).\bw_n
\]
By Lemma \ref{l:basic0}, the above expression equals $\langle w_0(\lambda_m'), h_i\rangle.\bw_n$, unless $m=0$, in which case, it equals 
\[
\left(\langle  w_0(\lambda_0'), h_i \rangle - n \sum_{\beta \in \Delta_+} \langle w_0(\beta), h_i \rangle \right).\bw_n
= \langle w_0(\lambda_0')-2n. \rho, h_i \rangle. \bw_n
\]
We conclude that $\fh_n$ acts on $\bw_n$ via the character 
\[
w_0(\Lambda')-2\rho_n=w_0(w_0(\Lambda)-2\rho_n) -2\rho_n = \Lambda,
\] as required.  
\qed


\bibliographystyle{alpha}

\bibliography{ref.Center}

\end{document}